\numberwithin{equation}{section}
\newtheorem{theorem}{Theorem}[section]
\newtheorem{lemma}[theorem]{Lemma}
\newtheorem{corollary}[theorem]{Corollary}
\newtheorem{question}[theorem]{Question}
\theoremstyle{definition}
\newtheorem{definition}[theorem]{Definition}
\newtheorem{remark}[theorem]{Remark}
\newtheorem{example}[theorem]{Example}
\newtheorem{convention}[theorem]{Convention}
\begin{document}


\newcommand{\pnpm}{\mathbb{P}^n \times \mathbb{P}^m}
\newcommand{\al}{\alpha}
\newcommand{\Ix}{I_X}
\newcommand{\Ssz}{\mathcal S_{Z}}
\newcommand{\az}{\alpha_{Z}}
\newcommand{\pr}{\mathbb{P}}
\newcommand{\supp}{\operatorname{Supp}}
\newcommand{\F}{\mathbb{F}}
\newcommand{\popo}{\mathbb{P}^1\times \mathbb{P}^1 }
\newcommand{\prpr}{\mathbb{P}^{n_1}\times \cdots\times \mathbb{P}^{n_r} }
\newcommand{\N}{\mathbb{N}}
\newcommand{\Z}{\mathbb{Z}}
\newcommand{\ui}{\underline{i}}
\newcommand{\pnr}{\mathbb{P}^{n_1}\times \cdots \times \mathbb{P}^{n_r}}
\newcommand{\ua}{\underline{\alpha}}
\newcommand{\un}{\underline{n}}
\newcommand{\uN}{\underline{N}}
\newcommand{\uj}{\underline{j}}
\newcommand{\ut}{\underline{t}}
\newcommand{\ud}{\underline{d}}
\newcommand{\depth}{\operatorname{depth}}
\newcommand{\pd}{\operatorname{pdim}}


\title{Separators of fat points in $\pnpm$}
\thanks{Update: May 24, 2010}

\author{Elena Guardo}
\address{Dipartimento di Matematica e Informatica\\
Viale A. Doria, 6 - 95100 - Catania, Italy}
\email{guardo@dmi.unict.it}
\urladdr{http://www.dmi.unict.it/$\sim$guardo/}

\author{Adam Van Tuyl}
\address{Department of Mathematical Sciences\\
Lakehead University, Thunder Bay, ON, P7B 5E1, Canada}
\email{avantuyl@lakeheadu.ca}
\urladdr{http://flash.lakeheadu.ca/$\sim$avantuyl/}

\keywords{separators, fat points, multiprojective space,
Hilbert function, resolutions}
\subjclass{13D40, 13D02, 14M05}

\begin{abstract}
We introduce
definitions for the separator of a fat point and the degree of a
fat point for a fat point scheme $Z \subseteq \pnpm$, and we
study some of their properties.
\end{abstract}
\maketitle


\section{Introduction}

A separator of a point and its degree
are two tools in the toolbox used to study points in projective
space.  Recall that if $X \subseteq \pr^n$ is a finite set of points,
and $P \in X$, then a {\bf separator} of $P$ is any
homogeneous form $F \in R = k[\pr^n] = k[x_0,\ldots,x_n]$
such that $F(P) \neq 0$, but $F(Q) = 0$ for all $Q \in
X \setminus \{P\}$.  Geometrically, a separator is a hypersurface
that passes through all the points of $X$ except $P$.  The
{\bf degree} of the point $P$, denoted $\deg_X(P)$,
is then the smallest degree of any separator of $P$.  The properties
of separators and their degrees were studied by \cite{ABM,B,BC,GKR,
K,O,So}, among others.

The above cited articles focused predominately on the case of reduced
sets of points in $\pr^n$.  There are two natural ways to
generalize this work.  The first such way is to consider
separators of points in a multiprojective space $\pnr$, as was
the focus of \cite{GV2,GV3,M2}.  The second way is to consider
separators of more arbitrary zero-dimensional schemes in $\pr^n$; the
papers \cite{GMVT,K} take this point of view.  In this paper,
we consider the marriage of these two ideas by studying
separators of non-reduced points (specifically,
fat points) in a multiprojective space.

We restrict ourselves in this paper primarily to the bigraded case of
$\pnpm.$  This restriction has the benefit of simplifying
our notation when compared to the general multigraded situation,
and at the same time, our results are much stronger in this context.
Once we recall the required background in Section 2,
we introduce in Section 3 our definition of a separator for a fat
point in $\pnpm$.  Our approach is similar to that of \cite{GMVT}
in that our definitions are defined in terms of the
bigraded generators of the ideal $I_{Z'}/I_Z$
in $R/I_Z$, where $Z' \subseteq Z \subseteq \pnpm$ are fat point
schemes and $R = k[\pnpm]$.  In Section 4 we
introduce the notion of a good set of minimal separators. Roughly
speaking, a good set of minimal separators allows us to describe a
basis for the vector space $(I_{Z'}/I_Z)_{\ut}$ for all $\ut \in
\N^2$. The main results of this paper are Theorem \ref{pnpmcase}
and Theorem \ref{pnrsepfrombetti}. The first theorem shows that arithmetically
Cohen-Macaulay (ACM) sets of fat points in $\pr^{n} \times
\pr^{m}$ have a good set of minimal separators.  The second
shows that if $R/I_Z$ is Cohen-Macaulay (CM), the degree of
a separator of a fat point is encoded into
the shifts of the last syzygy module of
$I_Z$, generalizing similar results of \cite{ABM,B,GMVT,GV3}.

We wish to point out that although some facts for fat
points in multiprojective spaces follow without any 
difficulty from the methods used in \cite{GMVT}, our main results
require additional development beyond what is done in \cite{GMVT}.
This is the case because  when we move to the case of (non)reduced
points $Z$ in a multiprojective spaces, we are no longer guaranteed that the
associated coordinate ring $R/I_Z$ is CM, and furthermore, even if
$R/I_Z$ is CM, it may not be true that $R/I_{Z'}$ is CM
for subschemes $Z' \subseteq Z$.   The fact that $R/I_Z$ and $R/I_{Z'}$ may
fail to be CM is an obstruction to generalizing some of the proofs
in \cite{GMVT} and at the same time, highlights the importance of
the CM property of zero-dimensional schemes in $\pr^n$.

\noindent {\bf Acknowledgments.}  We thank Martin
Kreuzer and Brian Harbourne for their useful
comments. The second author also thanks
the Universit\`a di Catania for its hospitality while
working on this project. He also received support from GNSAGA and NSERC.

\section{Preliminaries}

We recall the relevant properties of points in $\pnpm$.   The
study of such points was initiated in \cite{GuMaRa,GuMaRa3}; further
properties were developed in \cite{G,GV,GV2,GV3,VT,VT2}.
Throughout, $k$ denotes an algebraically closed field of
characteristic zero.

We shall write $(i_1,i_2)\in \N^2$ as $\ui$. We induce a partial
order on $\N^2$ by setting $(i_1,i_2) \succeq (j_1,j_2)$
if $i_t \geq j_t$ for $t=1,2$. The coordinate ring of the {\bf
biprojective space} $\pnpm$ is the $\N^2$-graded ring
$R=k[x_{0},\ldots,x_{n},y_{0},\ldots,y_{m}]$
where $\deg x_{i} =(1,0)$ and $\deg y_{i}= (0,1)$. A point in this space has the form
\[ P =[a_{0}:\cdots:a_{n}] \times [b_{0}:\cdots:b_{m}] \in \pnpm\]
and its defining ideal $I_P$ in $R$ is a prime ideal of the form
\[I_{P} =(L_{1},\ldots,L_{n},L'_{1},\ldots,L'_{m})\] \noindent where $\deg
L_{i}=(1,0)$ and $\deg L'_i=(0,1)$. When $X=\{P_1,\ldots,P_s\}$ is
a set of $s$ distinct points in $\pnpm$, and $m_1,\ldots,m_s$ are
positive integers, then $I_{Z}=I^{m_1}_{P_1}\cap \cdots \cap
I^{m_s}_{P_s}$ defines a {\bf fat point scheme} (or a {\bf set of fat
points}) which we denote by $Z = m_1P_1 + \cdots + m_sP_s$.  We
call $m_i$ the {\bf multiplicity} of the point $m_i$, and the set
$X$, sometimes denoted by $\supp(Z)$, is the {\bf support} of $Z$.
The {\bf degree}
 of a scheme of fat points $Z = m_1P_1 + \cdots
+ m_sP_s$ is then given by $\deg Z = \sum_{i=1}^s
\binom{m_i+N-1}{m_i-1}$ where $N = n+m$.

The ring $R/I_Z$ has Krull dimension 2, but
$1 \leq \depth R/I_Z \leq 2$ (see \cite{VT}).
When $\dim R/I_Z = 2 = \depth R/I_Z$, we
say $Z$ is
{\bf arithmetically Cohen-Macaulay} (ACM).

We need some results about the nonzero-divisors and longest
regular sequence in $R/I_Z$.

\begin{lemma}\label{nzd}
Let $Z \subseteq \pnpm$ be a set of fat points.
\begin{enumerate}
\item[$(i)$]  There
exist two forms $L_1$ and $L'_1$ such that $\deg L_1 =(1,0)$ and
$\deg L'_1=(0,1)$ and both $\overline{L_1},$ and $\overline{L'_1}$ are
nonzero-divisors on $R/I_Z$.
\item[$(ii)$]  If $Z$ is also ACM, then  there exist
elements $\overline{L}_1,\overline{L'}_1$ in $R/I_Z$ such that
$L_1,L'_1$ give rise to a regular sequence in $R/I_Z$ and $\deg
L_1 =(1,0)$ and $\deg L'_1=(0,1)$.
\end{enumerate}
\end{lemma}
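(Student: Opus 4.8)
The plan is to use a standard prime-avoidance argument for part $(i)$, and then an inductive degree-shifting argument for part $(ii)$.

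For part $(i)$: Let $I_Z = I_{P_1}^{m_1} \cap \cdots \cap I_{P_s}^{m_s}$. The key observation is that the associated primes of $R/I_Z$ are exactly the $I_{P_i}$ together possibly with the irrelevant ideals $\mathfrak{m}_x = (x_0,\dots,x_n)$ and $\mathfrak{m}_y = (y_0,\dots,y_m)$; but since $\dim R/I_Z = 2$ and each $I_{P_i}$ has height $N = n+m$, while $\mathfrak{m}_x$ has height $n+1$ and $\mathfrak{m}_y$ has height $m+1$, one checks that $\mathfrak{m}_x$ and $\mathfrak{m}_y$ cannot be associated primes of $R/I_Z$ (a form of degree $(1,0)$ and a form of degree $(0,1)$ already cut the dimension down appropriately; alternatively, $R/I_Z$ has no embedded primes because $I_Z$ is the intersection of the unmixed ideals $I_{P_i}^{m_i}$). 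Hence the set of zero-divisors on $R/I_Z$ is $\bigcup_{i=1}^s I_{P_i}$. First I would show that the degree-$(1,0)$ piece $R_{(1,0)}$ is not contained in $\bigcup_i (I_{P_i})_{(1,0)}$: each $(I_{P_i})_{(1,0)}$ is a proper linear subspace of the $(n+1)$-dimensional space $R_{(1,0)}$ (it is the space of linear forms in the $x$-variables vanishing at the $\pr^n$-coordinate of $P_i$, which has dimension $n$), and since $k$ is infinite, a vector space over $k$ is not a finite union of proper subspaces. So there is $L_1 \in R_{(1,0)}$ avoiding all the $I_{P_i}$, hence a nonzero-divisor; symmetrically for $L_1' \in R_{(0,1)}$.

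For part $(ii)$: Assuming $Z$ is ACM, so $\depth R/I_Z = 2$, I want $L_1$ of degree $(1,0)$ and $L_1'$ of degree $(0,1)$ forming a regular sequence. Take $L_1 \in R_{(1,0)}$ a nonzero-divisor as in $(i)$; then $\depth R/(I_Z, L_1) = 1$, so I need a second element of degree $(0,1)$ that is a nonzero-divisor on $R/(I_Z,L_1)$. The point is that the associated primes of $R/(I_Z, L_1)$ are still finitely many graded primes, none of which can contain all of $R_{(0,1)}$: the only candidate to worry about is $\mathfrak{m}_y$, but $\mathfrak{m}_y \supseteq R_{(0,1)}$ would force $\depth R/(I_Z,L_1) = 0$ after passing to the relevant localization, contradicting that $R/(I_Z,L_1)$ is CM of dimension $1$ (equivalently, $\mathfrak{m}_y$ is not an associated prime since that would make $y_0,\dots,y_m$ all zero-divisors, giving an embedded-type obstruction incompatible with depth $1$ and the bigraded structure). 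So again by the infiniteness of $k$, there is $L_1' \in R_{(0,1)}$ avoiding every associated prime of $R/(I_Z,L_1)$, and $L_1,L_1'$ is the desired regular sequence.

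The main obstacle is the bookkeeping of associated primes, specifically ruling out that $\mathfrak{m}_x$ or $\mathfrak{m}_y$ is associated to $R/I_Z$ or to $R/(I_Z,L_1)$ — this is where the hypotheses (ACM in part $(ii)$, unmixedness of the $I_{P_i}^{m_i}$ in part $(i)$) really get used, and one must be careful that cutting by a form of a single degree $(1,0)$ does not create an embedded prime supported at $\mathfrak{m}_y$. Once that is settled, the prime-avoidance step is routine given that the base field is infinite.
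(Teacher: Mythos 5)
Your argument is correct, and it is essentially the standard one: the paper itself gives no details here, deferring part $(i)$ to \cite[Lemma 3.3]{VT2} and part $(ii)$ to \cite[Proposition 3.2]{VT}, and those proofs are of exactly this prime-avoidance flavour, so you have in effect supplied the details the paper outsources. Two spots in your write-up deserve tightening. In $(i)$, the clean justification that $\operatorname{Ass}(R/I_Z)=\{I_{P_1},\ldots,I_{P_s}\}$ is not the dimension count in your parenthetical but the fact that each $I_{P_i}$ is generated by a regular sequence of $n+m$ linear forms, so $I_{P_i}^{m_i}$ is $I_{P_i}$-primary, and $\operatorname{Ass}$ of an intersection is contained in the union of the $\operatorname{Ass}$'s; after that the vector-space avoidance over the infinite field $k$ works as you say. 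In $(ii)$, the way to rule out the bad case is sharper than ``an embedded-type obstruction'': any $\mathfrak{p}\in\operatorname{Ass}(R/(I_Z,L_1))$ contains some $I_{P_i}$ and $L_1$, and $L_1\notin I_{P_i}$ forces $(I_{P_i},L_1)\supseteq\mathfrak{m}_x$; so if $\mathfrak{p}$ also contained $\mathfrak{m}_y$ it would equal the irrelevant ideal $\mathfrak{m}=(x_0,\ldots,x_n,y_0,\ldots,y_m)$, which is impossible because $\mathfrak{m}\in\operatorname{Ass}$ would mean $\depth R/(I_Z,L_1)=0$, whereas cutting the Cohen--Macaulay ring $R/I_Z$ by the nonzerodivisor $L_1$ leaves depth $1$. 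With those two points made explicit, the proof is complete.
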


\begin{proof}
Statement $(i)$ is \cite[Lemma 3.3]{VT2} extended to the nonreduced case.  For
$(ii)$,  adapt the proof of \cite[Proposition 3.2]{VT}.
\end{proof}

\begin{remark} \label{regularsequenceremark}
After a change of coordinates, we can assume $L_1 =
x_{0}$ and $L'_1 = y_{0}$ in Lemma \ref{nzd}.
Thus, when $Z$ is ACM, $\{x_{0},y_{0}\}$ (or $\{y_0,x_0\}$)
is the regular sequence on $R/I_Z$.
This also implies
that $x_{0}$ and $y_0$ do not
vanish at any point of $\supp(Z)$.
\end{remark}

We require a lemma about the bigraded resolution of a single
point.  Since $I_P$ is a complete intersection, the proof
is an application of the bigraded Koszul resolution.

\begin{lemma} \label{residealpt}
Let $P \in \pnpm$ be any point.  Then the minimal $\N^2$-graded
free resolution of $R/I_{P}$ has the form
\[0 \rightarrow \mathbb{G}_{N} \rightarrow \mathbb{G}_{N-1}
\rightarrow \cdots \rightarrow \mathbb{G}_1 \rightarrow R
\rightarrow
 R/I_{P} \rightarrow 0\]
where $N = n+m$, $\mathbb{G}_N = R(-n,-m)$ and
$\mathbb{G}_{N-1} = R^{n}(-n+1,-m)\oplus R^m(-n,-m+1).$
\end{lemma}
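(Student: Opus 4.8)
The plan is to realize the claimed resolution explicitly as a bigraded Koszul complex. First I would normalize the point: since $\mathrm{PGL}_{n+1}\times\mathrm{PGL}_{m+1}$ acts on $\pnpm$ by bigraded automorphisms of $R$ and acts transitively on points, after a change of coordinates we may assume $P = [1:0:\cdots:0]\times[1:0:\cdots:0]$, so that $I_P = (x_1,\ldots,x_n,y_1,\ldots,y_m)$. These $N = n+m$ forms have bidegrees $(1,0)$ (the $n$ forms $x_i$) and $(0,1)$ (the $m$ forms $y_j$), and they form a regular sequence on $R$: indeed $R/I_P \cong k[x_0,y_0]$ is a polynomial ring of Krull dimension $2 = \dim R - N$, so the $N$ generators are a system of parameters in the Cohen-Macaulay ring $R$, hence a regular sequence. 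In other words $I_P$ is a complete intersection.

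Next I would invoke the fact that the Koszul complex $\mathbb{K}_\bullet$ on a regular sequence of homogeneous elements resolves the quotient ring, and that it carries the $\N^2$-grading induced by the bidegrees of the generators. Writing $f_1,\ldots,f_n$ for the degree-$(1,0)$ generators and $f_{n+1},\ldots,f_N$ for the degree-$(0,1)$ ones, the $p$-th term is
\[\mathbb{K}_p = \bigoplus_{1 \le i_1 < \cdots < i_p \le N} R\bigl(-\deg f_{i_1} - \cdots - \deg f_{i_p}\bigr).\]
This resolution is minimal: every $f_i$ has bidegree $(1,0)$ or $(0,1)$, hence lies in the irrelevant maximal ideal $\mathfrak{m} = (x_0,\ldots,x_n,y_0,\ldots,y_m)$, so all entries of the Koszul differentials lie in $\mathfrak{m}$ and no free summand can be split off. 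Thus $\mathbb{K}_\bullet$ is the minimal $\N^2$-graded free resolution of $R/I_P$, and in particular it has length $N$, giving the displayed complex with $\mathbb{G}_p = \mathbb{K}_p$.

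It then remains to read off the two indicated terms by bookkeeping bidegrees. For $p = N$ there is a unique subset, and the bidegrees of all $N$ generators sum to $(n,m)$, so $\mathbb{G}_N = \mathbb{K}_N = R(-n,-m)$. For $p = N-1$ a subset of size $N-1$ is obtained by deleting exactly one generator: deleting one of the $n$ forms of bidegree $(1,0)$ leaves bidegree sum $(n-1,m)$, contributing a summand $R(-n+1,-m)$, and there are $n$ such subsets; deleting one of the $m$ forms of bidegree $(0,1)$ leaves bidegree sum $(n,m-1)$, contributing $R(-n,-m+1)$, with $m$ such subsets. Hence $\mathbb{G}_{N-1} = \mathbb{K}_{N-1} = R^{n}(-n+1,-m) \oplus R^m(-n,-m+1)$, as claimed.

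Because everything reduces to a direct computation with the Koszul complex, I do not expect a serious obstacle; the only points needing care are the verification of minimality (immediate once one observes all generator bidegrees are nonzero, hence in $\mathfrak m$) and the elementary, slightly fiddly bidegree bookkeeping at homological degree $N-1$.
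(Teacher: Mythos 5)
Your proof is correct and follows exactly the route the paper indicates: it notes that $I_P$ is a complete intersection and that the lemma is "an application of the bigraded Koszul resolution," which is precisely your argument, carried out in full detail (normalization of the point, regularity of the sequence, minimality, and the bidegree bookkeeping at homological degrees $N$ and $N-1$).
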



\section{Defining separators of fat points }

We introduce the definitions of a separator and
its degree for fat points in $\pnpm$.
The main idea is to reduce the multiplicity of
a fat point by one, and then use an ideal that captures the
information about passing from the larger scheme to the smaller one.

The following convention is used to
simplify our hypotheses throughout the paper.

\begin{convention}\label{convention}
Consider the fat point scheme
\[Z := m_1P_1 + \cdots + m_iP_i + \cdots + m_sP_s \subseteq \pnpm,\]
and fix a point $P_i \in \supp(Z)$.  We then let
\[Z':= m_1P_1 + \cdots
+ (m_i-1)P_i + \cdots + m_sP_s,\]
denote the fat point scheme obtained by reducing the multiplicity of
$P_i$ by one.  If $m_i= 1$, then the point $P_i$ does
not appear in the support of $Z'$.
\end{convention}

A separator is now defined in terms
of forms that pass through $Z'$ but not $Z$.

\begin{definition}  Let $Z = m_1P_1 + \cdots + m_iP_i+ \cdots + m_sP_s$ be a set
of fat points in $\pnpm$.  We say that $F$ is a {\bf separator of
the point $P_i$ of multiplicity $m_i$} if $F \in I_{P_i}^{m_i-1}
\setminus I_{P_i}^{m_i}$ and $F \in I_{P_j}^{m_j}$ for all $j \neq
i$.
\end{definition}

When $m_i =1$ for all $i$, then $Z$ is a reduced set of
points, and we recover the definition studied in \cite{GV2,GV3,M2}.
Using the notation of Convention
\ref{convention}, a form $F$ is a separator of the point $P_i$ of
multiplicity $m_i$ if $F \in I_{Z'} \setminus I_Z$. We can
algebraically compare $Z$ and $Z'$ by studying the ideal
$I_{Z'}/I_Z$ in the ring $R/I_Z$.  We recall a simple fact about
this ideal.

\begin{lemma}Let $Z$ and $Z'$ be as in Convention
\ref{convention}. Then there exists $p$ bihomogeneous polynomials
$\{{F}_1,\ldots,{F}_{p}\}$, where each $F_i$ is a separator of
$P_i$ of multiplicity $m_i$, such that in the ring $R/I_Z$, the
ideal $I_{Z'}/I_Z=
\left(\overline{F}_1,\ldots,\overline{F}_{p}\right)$.   Here,
$\overline{F}_i$ denotes the class of $F_i$.
\end{lemma}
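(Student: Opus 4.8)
The plan is to produce the separators by working one homogeneous degree at a time and then patching the data together. Since $I_{Z'}/I_Z$ is a finitely generated module over the Noetherian ring $R/I_Z$, we may pick a finite set of bihomogeneous generators $\overline{G}_1,\ldots,\overline{G}_q$ with $G_j \in I_{Z'}$ bihomogeneous. The issue is that a single such $G_j$ need not be a separator of any one point: it lies in $I_{Z'}=\bigcap_j I_{P_j}^{m_j} \cap I_{P_i}^{m_i-1}$, so automatically $G_j \in I_{P_\ell}^{m_\ell}$ for $\ell \neq i$, but there is no reason $G_j \notin I_{P_i}^{m_i}$; indeed $G_j$ could lie in $I_Z$ itself once we pass to a generating set, or it could be a nonseparator that only becomes relevant in combination with others. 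So the first step is to replace the $G_j$ by honest separators.

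Second, I would argue that each generator can be modified to a separator without changing the submodule it generates modulo $I_Z$. Fix $\overline{G}_j$; if $\overline{G}_j = 0$ in $R/I_Z$ discard it. Otherwise $G_j \in I_{Z'}\setminus I_Z$, which by the localization/primary-decomposition description of $I_Z$ and $I_{Z'}$ means $G_j \in I_{P_\ell}^{m_\ell}$ for all $\ell\neq i$ but $G_j \notin I_{P_i}^{m_i}$ — in other words $G_j$ already \emph{is} a separator of $P_i$ of multiplicity $m_i$ in the sense of the definition above. The only subtlety is that the point $P_i$ whose multiplicity drops is fixed by Convention \ref{convention}, so all the generators are separators of the \emph{same} point $P_i$; this is exactly the statement we want, with $p=q$ after discarding the zero classes. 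Thus the real content is just the remark that $I_{Z'}\setminus I_Z$ consists precisely of the separators of $P_i$: membership in $I_{Z'}$ forces the $I_{P_\ell}^{m_\ell}$ conditions for $\ell\neq i$ and the $I_{P_i}^{m_i-1}$ condition at $P_i$, while non-membership in $I_Z$ is, given the former, equivalent to $G_j \notin I_{P_i}^{m_i}$.

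The one point requiring a little care — the main obstacle — is the bihomogeneity: a set of \emph{bihomogeneous} generators of $I_{Z'}/I_Z$ exists because $I_Z$ and $I_{Z'}$ are both $\N^2$-graded ideals (each $I_{P_j}$ is generated in degrees $(1,0)$ and $(0,1)$, hence so are its powers and the finite intersections), so $I_{Z'}/I_Z$ is an $\N^2$-graded $R/I_Z$-module and admits a finite bihomogeneous generating set by graded Nakayama or simply by truncating a generating set of the finitely generated module. Having fixed such generators, each nonzero class is represented by a bihomogeneous form, and the argument of the previous paragraph applies verbatim to each. Collecting $\{F_1,\ldots,F_p\}$ as the chosen bihomogeneous representatives of the nonzero generators then gives $I_{Z'}/I_Z = (\overline{F}_1,\ldots,\overline{F}_p)$ with every $F_i$ a separator of $P_i$ of multiplicity $m_i$, as claimed.
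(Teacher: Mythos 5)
Your proposal is correct and follows essentially the same route as the paper: the paper's proof simply notes that $R/I_Z$ is Noetherian so $I_{Z'}/I_Z$ is finitely generated, and that any (bihomogeneous, nonzero) generator lies in $I_{Z'}\setminus I_Z$, which the preceding paragraph of the paper has already identified with the set of separators of $P_i$ of multiplicity $m_i$. Your additional care about bihomogeneity of the generators and discarding zero classes is a correct elaboration of what the paper leaves implicit.
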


\begin{proof} Because $R/I_Z$ is Noetherian, the ideal $I_{Z'}/I_Z$ is
finitely generated.  If $\{\overline{F}_1,\ldots,\overline{F}_p\}$
is a set of generators, then each $F_i \in I_{Z'}\setminus I_Z$.
\end{proof}

\begin{definition} We call  the set of bihomogeneous forms
$\{F_1,\ldots,F_{p}\} \subseteq R$ a {\bf set of minimal separators of
$P_i$ of multiplicity $m_i$} if
\begin{enumerate}
\item[$(a)$] $I_{Z'}/I_Z= \left(\overline{F}_1,\ldots,\overline{F}_{p}\right)$, and
\item[$(b)$] there does not exist a set
$\{G_1,\ldots,G_{q}\}$ with $q<p$ such that $I_{Z'}/I_Z =
\left(\overline{G}_1,\ldots,\overline{G}_{q}\right).$
\end{enumerate}
\end{definition}

\begin{remark}
Our approach is similar to \cite{K} in that we relate
a separator to generators
of an ideal of a smaller scheme modulo an ideal of a larger
scheme.  The focus of \cite{K} was primarily on the
case that $X$ is a zero-dimensional scheme, and $Y \subseteq X$ is a
subscheme with $\deg Y = \deg X - 1$.   Rather than an arbitrary
zero-dimensional scheme, we are interested in
fat point schemes $Z' \subseteq Z$ which normally
have $\deg Z' < \deg Z -1$.
\end{remark}

Our next step is to
develop a fat point analog for the degree of a point.

\begin{theorem}  \label{theorem2}
Let $Z$ and $Z'$ be as in Convention \ref{convention}, and fix a
total ordering $\leq$ of $\N^2$.   Let
$\{F_1,\ldots,F_{p}\}$ and $\{G_1,\ldots,G_{p}\}$ be two
sets of minimal separators of $P_i$ of multiplicity $m_i$. Relabel the
$F_i$'s so that $\deg F_1 \leq \cdots \leq \deg F_{p}$, and
similarly for the $G_i$'s.  Then
\[(\deg F_1,\ldots,\deg F_{p}) = (\deg G_1,\ldots,\deg G_{p}).\]
\end{theorem}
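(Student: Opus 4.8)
The plan is to identify a set of minimal separators of $P_i$ of multiplicity $m_i$ with a minimal bihomogeneous generating set of the finitely generated $\N^2$-graded module $M := I_{Z'}/I_Z$ over the $\N^2$-graded ring $A := R/I_Z$, and then to apply the graded Nakayama lemma. Write $\mathfrak{m} \subseteq A$ for the irrelevant maximal ideal, i.e. the image of $(x_0,\ldots,x_n,y_0,\ldots,y_m)$ in $A$; since $I_Z$ and $I_{Z'}$ are bihomogeneous, $A$ is $\N^2$-graded with $A_{(0,0)} = k$, and $M$ is an $\N^2$-graded $A$-module, finitely generated because $A$ is Noetherian. Then $V := M/\mathfrak{m}M$ is a finite-dimensional $\N^2$-graded $k$-vector space, and for each $\ut \in \N^2$ the number $h(\ut) := \dim_k V_{\ut}$ depends only on the pair $Z' \subseteq Z$ and on $P_i$ --- not on any choice of separators.

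The key step is the claim that a set $\{F_1,\ldots,F_q\} \subseteq I_{Z'}$ of bihomogeneous forms is a set of minimal separators of $P_i$ of multiplicity $m_i$ if and only if the classes $\overline{F}_1,\ldots,\overline{F}_q$ form a $k$-basis of $V$. This is graded Nakayama: $M$ is finitely generated and $\N^2$-graded, and $\bigcap_{t \geq 0}\mathfrak{m}^t M = 0$ because $\mathfrak{m}$ is generated in bidegrees $(1,0)$ and $(0,1)$ while $M$ is bounded below in total degree; hence a bihomogeneous subset generates $M$ over $A$ if and only if its image spans $V$, and such a generating set has minimal cardinality (condition $(b)$ of the definition) if and only if its image is a basis of $V$. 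In particular every set of minimal separators has cardinality $p = \dim_k V$. Moreover, since the $\N^2$-grading on $V$ is a direct sum $V = \bigoplus_{\ut}V_{\ut}$ and each $\overline{F}_j$ is homogeneous of bidegree $\deg F_j$, the elements of a given bidegree $\ut$ occurring in a homogeneous basis of $V$ must themselves form a basis of $V_{\ut}$; consequently the number of indices $j$ with $\deg F_j = \ut$ equals $h(\ut)$, for every $\ut \in \N^2$.

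Applying this to the two given sets $\{F_1,\ldots,F_p\}$ and $\{G_1,\ldots,G_p\}$, we conclude that for each $\ut \in \N^2$ exactly $h(\ut)$ of the $F_j$ and exactly $h(\ut)$ of the $G_j$ have bidegree $\ut$. Thus the multiset $\{\deg F_1,\ldots,\deg F_p\}$ equals the multiset $\{\deg G_1,\ldots,\deg G_p\}$, and after relabelling so that $\deg F_1 \leq \cdots \leq \deg F_p$ and $\deg G_1 \leq \cdots \leq \deg G_p$ with respect to the fixed total order $\leq$ on $\N^2$, the tuples coincide term by term. I expect no serious obstacle beyond setting up the $\N^2$-graded Nakayama argument correctly: the content of the theorem is exactly the uniqueness of the $0$-th bigraded Betti numbers of $M$ over $A$ (equivalently, of the bigraded minimal free resolution of $M$), and the only mild point requiring care is the verification of $\bigcap_{t}\mathfrak{m}^t M = 0$, which is what licenses Nakayama in this non-local graded setting.
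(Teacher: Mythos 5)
Your proposal is correct and follows essentially the same route as the paper: the paper's proof also identifies the number of degree-$\ud$ minimal generators of $W=I_{Z'}/I_Z$ with $\dim_k W_{\ud}/(R_{(1,0)}W_{\ud-(1,0)}+R_{(0,1)}W_{\ud-(0,1)})$, which is exactly your $\dim_k(M/\mathfrak{m}M)_{\ud}$, and concludes that this dimension is independent of the chosen set of separators. Your version merely makes explicit the graded Nakayama justification (including the check that $\bigcap_t \mathfrak{m}^tM=0$) that the paper leaves implicit.
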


\begin{proof}
Let $W = (I_{Z'}/I_Z)$.  Both
$\left\{\overline{F}_1,\ldots,\overline{F}_p\right\}$
and $\{\overline{G}_1,\ldots,\overline{G}_p\}$ are a minimal set of generators
for this ideal.  The number of generators of degree $\ud$ of $W$
is the dimension of
\[Y = W_{\ud}/(R_{e_1}W_{\ud-e_1} + R_{e_2}W_{\ud-e_2})\]
as a vector space.
Here, $W_{\underline{j}}$ is the vector space of all the forms of
degree $\underline{j}$ in $W$, $R_{e_i}$ denotes the elements of
degree $e_i$ in $R$, and $R_{e_i}W_{\ud-e_i} = \{V_1V_2 ~|~ V_1
\in R_{e_i} ~~\text{and}~~ V_2 \in W_{\ud-e_i}\}$.   The
generators of degree $\ud$ in
$\left\{\overline{F}_1,\ldots,\overline{F}_p\right\}$
and $\{\overline{G}_1,\ldots,\overline{G}_p\}$ therefore
form a basis for $Y$, thus implying that the number
of generators of degree $\ud$ is the same.
\end{proof}

In light of Theorem \ref{theorem2}, we can define the degree of a fat point.

\begin{definition}  Let $\{F_1,\ldots,F_{p}\}$ be any
set of minimal separators of $P_i$ of multiplicity $m_i$, and relabel so
that $\deg F_1 \leq \cdots \leq \deg F_{p}$ with respect to any
total ordering on $\N^2$.  Then the {\bf degree of the minimal
separators of $P_i$ of multiplicity $m_i$} is
\[\deg_Z(P_i) := (\deg F_1,\ldots, \deg
F_{p}) ~~\mbox{with $\deg F_i \in \N^2$}.\]
\end{definition}

We illustrate some of the above ideas with the following two examples.

\begin{example}\label{example1}
Let $Z = mP$ be a single fat point of multiplicity $m \geq
2$ in $\popo$.  We can assume that $I_P =
(x_1,y_1)$, and hence $I_Z = I_P^m$.
Then
\[I_{Z'}/I_Z = I_P^{m-1}/I_P^m =
\left.\left(\overline{M} ~\right|~
M = x_1^{a}y_1^{b} ~~\mbox{with $a + b = m-1$}\right).\]
The generators of $I_P^{m-1}$ are a set of minimal
separators of  $P$ of multiplicity $m$, whence
\[\deg_Z(P) = ((0,m-1),(1,m-2),\ldots,(m-2,1),(m-1,0)).\]
Note that in this case we have $m=|\deg_Z(P)| = \deg Z -\deg Z'.$
The situation where $|\deg_Z(P)| = \deg Z -\deg Z'$ plays
an important role in the next section.
\end{example}

\begin{example}\label{example2}
We consider two fat points  $Z = 2P_1 + 2P_2$ where
$P_{1} = [1:0:0] \times [1:0:0:0]$ and $P_2 = [0:0:1]
\times [0:0:0:1]$ in $\pr^2 \times \pr^3$. Note that $I_Z$
is monomial ideal since $I_{P_1}$ and $I_{P_2}$
are monomial ideals.

Let $Z' = 2P_1 + P_2$.  To find the separators of $P_2$ of
multiplicity $2$, it is enough to determine which generators of
$I_{Z'}$ do not belong to $I_Z$.  Using {\tt CoCoA} \cite{C}, we get
\[
\{x_1x_2, x_1y_3, x_2y_1, x_2y_2,  y_1y_3, y_2y_3, x_0x_2^2,
x_2^2y_0, x_0x_2y_3, x_2y_0y_3, x_0y_3^2, y_0y_3^2\}.\] It then
follows that\[ \deg_{Z}(P_2) =
((0,2),(0,2),(0,3),(1,1),(1,1),(1,1),(1,2),(1,2),(2,0),(2,1),(2,1),(3,0)),\]
where we ordered our tuples with respect to the
lex ordering. Note that $|\deg_Z(P_2)| =
12$, which does not equal $\deg Z - \deg Z' = 5$.  In this case,
$Z$ is not ACM.
\end{example}


\section{Good Separators}

We introduce the notion of a good set of minimal
separators.  Roughly speaking, a minimal set of separators for a
fat point is a good set of separators if the separators
can be used to construct a basis
for the vector space $(I_{Z'}/I_Z)_{\ut}$ for all $\ut \in \N^2$.

Recall that by Remark \ref{regularsequenceremark}
we can assume that none of the points in $\supp(Z)$ lie
on the lines defined by $x_{0}$ and $y_{0}$.  That is,
$x_{0}$ and $y_0$ are nonzero-divisors in the rings $R/I_Z$ and
$R/I_{Z'}$.  So, if $\overline{0} \neq \overline{F} \in
(I_{Z'}/I_Z)$, then $\overline{0} \neq
\overline{x_{0}^{a}y_{0}^{b}F} \in (I_{Z'}/I_Z)$ for any $(a,b)
\in \N^2$. With these observations in hand, we introduce the
following definition.

\begin{definition}
Let $Z$ and $Z'$ be as in Convention \ref{convention}, and let
$\{F_1,\ldots,F_p\}$ be a set of minimal separators of the point
$P_i$ of multiplicity $m_i$.  Let $\deg F_i =
(d_{i1},d_{i2})$. We call $\{F_1,\ldots,F_p\}$ a {\bf good set of
minimal separators}  if
for each $\underline{t} = (t_1,t_2) \in \mathbb {N}^2$ the set
\[\left\{\overline{x_{0}^{t_{1}-d_{11}}y_{0}^{t_{2}-d_{12}}F_1},\ldots,
\overline{x_{0}^{t_{1}-d_{p1}}y_{0}^{t_{2}-d_{p2}}F_p} \right\}\]
is a linearly independent set of elements in
$(I_{Z'}/I_Z)_{\underline t}$, where if $t_{j}-d_{kj}<0$ for some
$k$, then the term $\overline{x_{0}^{t_{1}-d_{k1}}
y_{0}^{t_{2}-d_{k2}}F_{k}}$ is omitted.
\end{definition}

\begin{example}\label{example4}
Consider the points $P_1 = [1:0] \times [1:0]$ and $P_2 =
[1:1] \times [1:1]$ in $\popo$, and set $Z = \{P_1,P_2\}$ and $Z'
= \{P_1\}$.  Thus, $I_Z =
(x_1,y_1) \cap (x_1-x_0,y_1-y_0)$ and $I_{Z'} = (x_1,y_1)$. So
$(I_{Z'}/I_Z) = (\overline{x}_1,\overline{y_1}).$
Now, $\overline{y_0x_1}$ and $\overline{x_0y_1}$ are both
separators of $P_2$ of degree $\ut = (1,1)$ in $(I_{Z'}/I_Z)_{\ut}$.
However, $\overline{y_0x_1} - \overline{x_0y_1} = \overline{0} \in
(I_{Z'}/I_Z)_{\ut}$ because
$y_0x_1-x_0y_1 = y_0(x_1-x_0) - x_0(y_1 - y_0) \in I_Z$, so $\overline{y_0x_1}$ and $\overline{x_0y_1}$ are
not linearly independent.  Thus, $\{\overline{x}_1,\overline{y_1}\}$
is not a good set of minimal separators.
\end{example}

A good set of minimal separators has the following useful
properties.

\begin{theorem} \label{linindep} Let $Z,Z'$ be as in Convention
\ref{convention}. Suppose that $\{F_1,\ldots,F_p\}$  is a
good set of minimal separators of the point $P_i$ of multiplicity $m_i$. Then
\begin{enumerate}
\item [$(i)$] for every $\ut \in \N^2$ a basis for $(I_{Z'}/I_{Z})_{\ut}$ is given by
\[\left\{\overline{x_{0}^{t_{1}-d_{11}}y_{0}^{t_{2}-d_{12}}F_1},\ldots,
\overline{x_{0}^{t_{1}-d_{p1}}y_{0}^{t_{2}-d_{p2}}F_p}\right\}
;\]
\item [$(ii)$] $\dim_k (I_{Z'}/I_Z)_{\ut} =|\{F_i ~|~ \deg F_i \preceq \ut\}|  ~~\mbox{for all $\ut \succeq \underline{0}$; and }$
\item [$(iii)$] $p= \deg Z -\deg Z' =
\binom{m_i+N-1}{m_i -1}-\binom{m_i+N-2}{m_i-2}$, where $N = n+m$.
\end{enumerate}
\end{theorem}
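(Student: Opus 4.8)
The plan is to prove the three statements in sequence, deriving each from the previous one together with the defining property of a good set of minimal separators and the structure results already at our disposal. For $(i)$, fix $\ut \in \N^2$ and write $W = I_{Z'}/I_Z$. By the definition of a good set of minimal separators, the (nonzero) classes
\[
\overline{x_0^{t_1-d_{k1}}y_0^{t_2-d_{k2}}F_k}, \qquad \deg F_k = (d_{k1},d_{k2}) \preceq \ut,
\]
form a linearly independent subset of $W_{\ut}$. So the only thing to check is that they span $W_{\ut}$. Take any $\overline{H} \in W_{\ut}$; lifting to $H \in I_{Z'}$, we may write $\overline{H} = \sum_k \overline{A_k}\,\overline{F_k}$ in $W = (\overline{F_1},\ldots,\overline{F_p})$ with $A_k \in R$, and by taking the bidegree-$(\ut)$ component we may assume $\deg A_k = \ut - \deg F_k$ for those $k$ with $\deg F_k \preceq \ut$ (the other terms contribute nothing). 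Now the point is that modulo $I_Z$ every monomial of bidegree $\ua$ is equivalent to a scalar multiple of $x_0^{a_1}y_0^{a_2}$: since $x_0,y_0$ are nonzero-divisors on $R/I_Z$ (Remark \ref{regularsequenceremark}) and do not vanish on $\supp(Z)$, on each component $I_{P_j}^{m_j}$ one reduces any monomial against $x_0,y_0$ — this is exactly the mechanism illustrated in Example \ref{example4}. Hence $\overline{A_k F_k} \in W_{\ut}$ is a scalar multiple of $\overline{x_0^{t_1-d_{k1}}y_0^{t_2-d_{k2}}F_k}$, and $(i)$ follows. (Alternatively one can argue more slickly: the linearly independent set of $(i)$ has size $|\{F_i : \deg F_i \preceq \ut\}|$, which is an invariant of $W$ independent of the chosen good set by Theorem \ref{theorem2}, so if it failed to span we could produce a genuinely smaller generating set in some bidegree, contradicting minimality; I would include whichever of these two arguments reads more cleanly.)

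Statement $(ii)$ is then immediate: the basis in $(i)$ has exactly $|\{F_i \mid \deg F_i \preceq \ut\}|$ elements, since the term indexed by $k$ is present precisely when $t_j - d_{kj} \ge 0$ for both $j$, i.e. when $\deg F_k \preceq \ut$.

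For $(iii)$, the strategy is to compare Hilbert functions. From $(ii)$, as $\ut$ runs to infinity (say $\ut \succeq \deg F_i$ for all $i$) we get $\dim_k (I_{Z'}/I_Z)_{\ut} = p$ for all sufficiently large $\ut$. On the other hand, the short exact sequence $0 \to I_{Z'}/I_Z \to R/I_Z \to R/I_{Z'} \to 0$ gives
\[
\dim_k (I_{Z'}/I_Z)_{\ut} = H_{R/I_Z}(\ut) - H_{R/I_{Z'}}(\ut),
\]
and for $\ut \gg \underline 0$ the bigraded Hilbert function of a fat point scheme stabilizes to its degree (this is standard for zero-dimensional schemes in $\pnpm$ and follows from the existence of the nonzero-divisors in Lemma \ref{nzd}$(i)$ reducing to the artinian situation). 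Therefore $p = \deg Z - \deg Z'$. The final equality is a direct computation from the formula $\deg Z = \sum_j \binom{m_j+N-1}{m_j-1}$ recalled in Section 2: $Z$ and $Z'$ differ only in the multiplicity of $P_i$, which drops from $m_i$ to $m_i-1$, so
\[
\deg Z - \deg Z' = \binom{m_i+N-1}{m_i-1} - \binom{m_i+N-2}{m_i-2}.
\]

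The main obstacle is the spanning half of $(i)$ — concretely, justifying that modulo $I_Z$ one may replace an arbitrary coefficient monomial $A_k$ of bidegree $\ut - \deg F_k$ by the single monomial $x_0^{t_1-d_{k1}}y_0^{t_2-d_{k2}}$ up to scalar. This requires knowing that the images of the monomials of a fixed bidegree in $R/I_Z$ are, after the change of coordinates of Remark \ref{regularsequenceremark}, spanned by the single power-product of $x_0,y_0$ — which is the statement that $R/I_Z$ localized at the $\mathfrak m$-primary parts looks like a tensor product of truncated polynomial rings in the two "affine" variable blocks, with $x_0,y_0$ the units. If one prefers to avoid this local analysis, the invariance argument sketched parenthetically above sidesteps it entirely, using only Theorem \ref{theorem2} and minimality; I expect that is the cleaner route and would present it as the main proof of $(i)$.
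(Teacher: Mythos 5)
There is a genuine gap in your spanning argument for part $(i)$. Your key claim --- that ``modulo $I_Z$ every monomial of bidegree $\ua$ is equivalent to a scalar multiple of $x_0^{a_1}y_0^{a_2}$'' --- is false: it would force $\dim_k (R/I_Z)_{\ua} \leq 1$ for every $\ua$, whereas $\dim_k (R/I_Z)_{\ua} = \deg Z$ for $\ua \gg \underline{0}$. The correct mechanism (and the one the paper uses) is different. After placing $P_i$ at $[1:0:\cdots:0]\times[1:0:\cdots:0]$, so that $I_{P_i} = (x_1,\ldots,x_n,y_1,\ldots,y_m)$, every bihomogeneous coefficient $A_k$ of bidegree $\ut - \deg F_k$ decomposes as $A_k = c_k x_0^{t_1-d_{k1}}y_0^{t_2-d_{k2}} + A_k'$ with $A_k' \in I_{P_i}$. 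The remainder $A_k'$ is \emph{not} congruent to anything nice modulo $I_Z$; rather the \emph{product} $A_k'F_k$ lies in $I_Z$, because $F_k \in I_{P_i}^{m_i-1}$ forces $A_k'F_k \in I_{P_i}^{m_i}$, while $F_k \in I_{P_j}^{m_j}$ already holds for $j \neq i$, so $A_k'F_k \in \bigcap_j I_{P_j}^{m_j} = I_Z$. The reduction thus happens at the level of the products $A_kF_k$, using the fat-point structure of $I_Z$ at $P_i$, not at the level of the coefficients $A_k$ in $R/I_Z$.

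Your parenthetical ``slicker'' alternative does not close the gap either: if the linearly independent set failed to span $(I_{Z'}/I_Z)_{\ut}$, that would only say $\dim_k (I_{Z'}/I_Z)_{\ut} > |\{F_i \mid \deg F_i \preceq \ut\}|$, which is perfectly consistent with $\{F_1,\ldots,F_p\}$ being a minimal generating set --- the extra dimensions could come from classes $\overline{mF_k}$ with $m$ a monomial other than a power product of $x_0,y_0$. Theorem \ref{theorem2} controls only the number of minimal generators in each bidegree, not $\dim_k (I_{Z'}/I_Z)_{\ut}$, so no contradiction with minimality arises. Parts $(ii)$ and $(iii)$ of your proposal are correct and follow the paper's route: $(ii)$ is immediate from $(i)$, and $(iii)$ combines the short exact sequence relating $I_{Z'}/I_Z$, $R/I_Z$ and $R/I_{Z'}$ with the stabilization $\dim_k(R/I_Z)_{\ut} = \deg Z$ for $\ut \gg \underline{0}$ and the binomial formula for the degree of a fat point scheme.
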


\begin{proof}
Assume that $P = P_i = [1:0:\cdots:0] \times  [1:0:\cdots:0]$ so
that $I_P = (x_{1},\ldots,x_{n},y_{1},\ldots,y_{m})$ .

$(i)$ By definition, the elements
$\left\{\overline{x_0^{t_1-d_{11}}y_0^{t_2-d_{12}}F_1},\ldots,
\overline{x_0^{t_1-d_{p1}}y_0^{t_2-d_{p2}}F_p}\right\}$ form a
linearly independent set in
$(I_{Z'}/I_Z)_{\underline{t}},$ so it suffices to show that
they also span $(I_{Z'}/I_Z)_{\underline{t}}$.  For any
$\overline{H} \in (I_{Z'}/I_Z)_{\underline{t}}$, there must exist
homogeneous forms $G_1,\ldots,G_p$ such that
\[\overline{H} = \overline{G_1F_1 + \cdots + G_pF_p} ~~\mbox{with $\deg G_i = \underline{t}-\deg F_i $}.\]
Rewrite each $G_i$ as $G_i =
c_ix_0^{t_1-d_{i1}}y_0^{t_2-d_{i2}} + G'_i$ with $G'_i \in I_P$.
For each $i=1,\ldots,p$, we have $G'_iF_i \in I_Z$.  To see this,
note that $F_i \in I_{P_j}^{m_j}$ if $P_j \neq P$. On the
other hand, $F_i \in I_{P}^{m_{i}-1}$ and $G'_i \in I_P$, so
$G'_iF_i \in I_P^{m_i}$.  Hence, $G'_iF_i \in I_Z = I_{P_1}^{m_1}
\cap \cdots \cap I_{P}^{m_{i}} \cap \cdots \cap I_{P_s}^{m_s}$.
This implies
\[\overline{H} = \overline{c_1x_0^{t_1-d_{11}}y_0^{t_2-d_{12}}F_1 + \cdots + c_px_0^{t_1-d_{p1}}y_0^{t_2-d_{p2}}F_p},\]
i.e., $\overline{H}$ is in the span of
$\left\{\overline{x_0^{t_1-d_{11}}y_0^{t_2-d_{12}}F_1},\ldots,\overline{
x_0^{t_1-d_{p1}}y_0^{t_2-d_{p2}}F_p},\right\}$.

$(ii)$  This follows directly from $(i)$.

$(iii)$ The second equality can be computed directly from the degree formula.
We prove the first equality.
For all $\ut \in \N^2$ we have a short exact sequence of vector spaces
\begin{equation}\label{ses}
0 \longrightarrow (I_{Z'}/I_Z)_{\ut} \longrightarrow
(R/I_Z)_{\ut} \longrightarrow (R/I_{Z'})_{\ut} \longrightarrow
0.
\end{equation}
Take any $\ut =(t_1,t_2) \gg \underline{0}$, i.e., $t_i \gg
0$ for $i=1,2$.  For any set of fat points $Z$, it is known that
$\dim_k (R/I_Z)_{\ut} = \deg Z$ for $\ut \gg \underline{0}$
(cf. \cite[Proposition 4.4]{SVT}).
Hence, if  $\ut \gg \underline{0}$
\[\dim_k(I_{Z'}/I_Z)_{\ut} = \dim_k(R/I_Z)_{\ut} - \dim_k (R/I_{Z'})_{\ut}
= \deg Z - \deg Z'.\]
But by part $(i)$, for $\ut \gg \underline{0}$,  $\dim_k(I_{Z'}/I_Z)_{\ut} = p$,
so the conclusion follows.
\end{proof}

Recall that the {\bf Hilbert function} of $R/I_Z$ is
the function $H_Z: \N^2 \rightarrow \N$ defined by
\[H_Z(\ut):= \dim_{k} (R/I_Z)_{\ut} = \dim_{k} R_{\ut} - \dim_{k}(I_Z)_{\ut}~~\mbox{for
all $\ut \in \N^2$.}\]
The Hilbert functions of $Z$ and $Z'$ are then linked by
$\deg_Z(P)$ when the minimal separators of $P$ of multiplicity
$m$ are also a good set of minimal separators.  The result
follows directly from Theorem \ref{linindep} $(ii)$
and the short exact sequence ($\ref{ses}$).

\begin{corollary}  \label{hilbertfunction2}
Let $Z$ and $Z'$ be as in Convention \ref{convention}, and suppose
that $\deg_Z(P) = (\ud_1,\ldots,\ud_p)$ and that
$\{F_1,\ldots,F_p\}$ is a good set of minimal separators. Then
\[H_{Z'}({\underline t})  =  H_Z({\underline t}) -
|\{\ud_j ~|~ \ud_j \preceq \ut\}| ~~\mbox{for all $\ut \in
\N^2$.}\]
\end{corollary}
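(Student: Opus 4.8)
The plan is to deduce Corollary \ref{hilbertfunction2} directly from the short exact sequence (\ref{ses}) together with Theorem \ref{linindep}$(ii)$. The short exact sequence of vector spaces
\[
0 \longrightarrow (I_{Z'}/I_Z)_{\ut} \longrightarrow (R/I_Z)_{\ut} \longrightarrow (R/I_{Z'})_{\ut} \longrightarrow 0
\]
is valid for every $\ut \in \N^2$ (it is simply the degree-$\ut$ strand of the exact sequence $0 \to I_{Z'}/I_Z \to R/I_Z \to R/I_{Z'} \to 0$ of $\N^2$-graded $R$-modules). Taking dimensions over $k$ and using that dimension is additive on short exact sequences of finite-dimensional vector spaces, I get
\[
H_{Z'}(\ut) = \dim_k (R/I_{Z'})_{\ut} = \dim_k (R/I_Z)_{\ut} - \dim_k (I_{Z'}/I_Z)_{\ut} = H_Z(\ut) - \dim_k (I_{Z'}/I_Z)_{\ut}
\]
for all $\ut \in \N^2$.

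The only remaining ingredient is to identify $\dim_k (I_{Z'}/I_Z)_{\ut}$. Since $\{F_1,\ldots,F_p\}$ is assumed to be a good set of minimal separators, Theorem \ref{linindep}$(ii)$ applies and gives $\dim_k (I_{Z'}/I_Z)_{\ut} = |\{F_i \mid \deg F_i \preceq \ut\}|$ for all $\ut \succeq \underline 0$. Recalling that $\deg_Z(P) = (\ud_1,\ldots,\ud_p)$ is by definition the tuple of degrees $(\deg F_1,\ldots,\deg F_p)$, this count is exactly $|\{\ud_j \mid \ud_j \preceq \ut\}|$. Substituting into the displayed formula yields the claimed identity
\[
H_{Z'}(\ut) = H_Z(\ut) - |\{\ud_j \mid \ud_j \preceq \ut\}|
\]
for all $\ut \succeq \underline 0$. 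For $\ut \in \N^2$ that is not $\succeq \underline 0$ there is nothing to check, since $\N^2$ already consists of tuples with nonnegative entries, so every $\ut \in \N^2$ satisfies $\ut \succeq \underline 0$ and the formula holds on all of $\N^2$.

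There is really no substantive obstacle here: the statement is a formal consequence of two facts already in hand, and the proof is a two-line dimension count. The one thing I would be slightly careful about is the edge behavior of the set $\{F_i \mid \deg F_i \preceq \ut\}$ — for small $\ut$ this set may be empty (the term is then omitted in Theorem \ref{linindep}$(i)$ since the exponents $t_j - d_{kj}$ would be negative), which is consistent with $(I_{Z'}/I_Z)_{\ut} = 0$ and hence $H_{Z'}(\ut) = H_Z(\ut)$ in that range; this is automatically encoded in the formula. So the proof is simply the remark that the corollary follows directly from Theorem \ref{linindep}$(ii)$ and the short exact sequence (\ref{ses}), as already indicated in the text preceding the statement.
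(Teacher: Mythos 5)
Your proof is correct and follows exactly the route the paper indicates: the paper gives no separate written proof of this corollary, stating only that it "follows directly from Theorem \ref{linindep}~$(ii)$ and the short exact sequence (\ref{ses})," which is precisely the dimension count you carry out. Nothing is missing.
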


\section{Existence of Good Separators in $\pnpm$}

As Theorem \ref{linindep} suggests, a good set of minimal separators
has some useful properties.  A re-examination of the proof
of \cite[Theorem 3.3]{GMVT} shows that when $Z$ is a set
of fat points in $\pr^n$, then the minimal separators of
the point $P_i$ of multiplicity $m_i$ do form
a good set of minimal separators.  Further examination of this proof
reveals that we need the fact that $Z$ is ACM.
We now show that if $Z \subseteq \pnpm$ is ACM,
then for every point $P \in \supp(Z)$, the set of minimal
separators of $P$ forms a good set of minimal separators.

\begin{theorem}\label{pnpmcase}
Suppose that $Z = m_1P_1 + \cdots + m_sP_s$
is a set of fat points in $\pr^n \times \pr^m$,
and furthermore, suppose that $Z$ is ACM.  If $\{F_1,\ldots,F_p\}$
is a set of minimal separators of the point $P_i$
of multiplicity $m_i$, then $\{F_1,\ldots,F_p\}$
is also a good set of minimal separators.
\end{theorem}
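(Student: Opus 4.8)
The plan is to reduce the statement to a single structural fact: after the change of coordinates of Remark \ref{regularsequenceremark}, the ideal $M := I_{Z'}/I_Z$ of $R/I_Z$ is a \emph{free} module over the polynomial subring $S := k[x_0,y_0] \subseteq R$, having $\{\overline{F}_1,\ldots,\overline{F}_p\}$ as a bihomogeneous basis. Granting this, the theorem follows immediately: writing $\deg F_j = (d_{j1},d_{j2})$, for each $\ut = (t_1,t_2) \in \N^2$ we get
\[
(I_{Z'}/I_Z)_{\ut} = M_{\ut} = \bigoplus_{j=1}^{p} S_{\ut - \deg F_j}\cdot \overline{F}_j ,
\]
and since $\dim_k S_{\underline{u}}$ is $1$ if $\underline{u} \in \N^2$ (spanned by $x_0^{u_1}y_0^{u_2}$) and $0$ otherwise, the set $\{\overline{x_0^{t_1-d_{j1}}y_0^{t_2-d_{j2}}F_j} ~|~ \deg F_j \preceq \ut\}$ is a $k$-basis of $(I_{Z'}/I_Z)_{\ut}$; in particular it is linearly independent, which is exactly the definition of a good set of minimal separators.

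To see that $M$ is $S$-free I would work with the short exact sequence of $\N^2$-graded $S$-modules
\[
0 \longrightarrow I_{Z'}/I_Z \longrightarrow R/I_Z \longrightarrow R/I_{Z'} \longrightarrow 0 .
\]
Because $Z$ is ACM, Lemma \ref{nzd}$(ii)$ shows $\{x_0,y_0\}$ is a regular sequence on $R/I_Z$; since $\dim R/I_Z = 2$, it is a homogeneous system of parameters, so $R/I_Z$ is module-finite over $S$ and, having depth $2$ over $S$, is a maximal Cohen--Macaulay $S$-module, hence $S$-free by the $\N^2$-graded Auslander--Buchsbaum formula ($\pd_S R/I_Z = \depth S - \depth_S R/I_Z = 0$). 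On the other hand, $x_0$ lies in no associated prime $I_{P_l}$ of $R/I_{Z'}$ (Remark \ref{regularsequenceremark}), so $\overline{x_0}$ is a nonzero-divisor on $R/I_{Z'}$ and $\depth_S R/I_{Z'} \ge 1$; as $R/I_{Z'}$ is a quotient of $R/I_Z$ it too is module-finite over $S$, whence $\pd_S R/I_{Z'} \le 1$. Feeding these into the sequence — via Schanuel's lemma, or the estimate $\pd_S(\ker) \le \max\{\pd_S(R/I_Z),\, \pd_S(R/I_{Z'}) - 1\}$ — forces $\pd_S M = 0$, i.e. $M$ is a finitely generated graded projective, hence free, $S$-module.

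It remains to identify the basis. From the sequence, together with the fact that the Hilbert function of a fat point scheme in $\pnpm$ stabilizes to its degree (recalled from \cite{SVT} in the proof of Theorem \ref{linindep}), taking ranks over $S$ gives $\operatorname{rank}_S M = \deg Z - \deg Z' =: r$. Next, the spanning part of the proof of Theorem \ref{linindep}$(i)$ — which uses only that $\overline{F}_1,\ldots,\overline{F}_p$ generate $I_{Z'}/I_Z$ over $R/I_Z$ and that $(R/I_P)_{\underline{u}}$ is one-dimensional, and \emph{not} the ACM hypothesis — shows every $M_{\ut}$ is $k$-spanned by the $S$-multiples $\overline{x_0^{t_1-d_{j1}}y_0^{t_2-d_{j2}}F_j}$; hence $\{\overline{F}_1,\ldots,\overline{F}_p\}$ generates $M$ over $S$, so $p \ge r$. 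Finally, since $M$ is $S$-free of rank $r$ we have $\dim_k M/(x_0,y_0)M = r$, and the surjection $M/(x_0,y_0)M \twoheadrightarrow M/\mathfrak{m}M$ (where $\mathfrak{m}$ is the homogeneous maximal ideal of $R/I_Z$) shows $r \ge \dim_k M/\mathfrak{m}M = p$. Thus $p = r$, and a generating set of $p=r$ elements of a free $S$-module of rank $r$ is a basis.

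The step I expect to be the main obstacle is the middle one, concluding that $M$ is $S$-free. The ACM hypothesis on $Z$ is used in an essential way there: it is what makes $R/I_Z$ genuinely $S$-free rather than merely of finite projective dimension, while — as flagged in the introduction — $R/I_{Z'}$ need not be ACM, so one can only afford $\pd_S R/I_{Z'} \le 1$ and must check this still pushes $\pd_S M$ down to $0$. Checking module-finiteness over $S$ and invoking the $\N^2$-graded Auslander--Buchsbaum formula is the extra technical input; this is precisely the ``development beyond \cite{GMVT}'' mentioned earlier, since in a single projective space the analogous subring is a PID, where torsion-freeness alone already yields freeness.
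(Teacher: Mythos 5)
Your argument is correct, but it proves the theorem by a genuinely different route than the paper. The paper argues directly by contradiction: assuming a nontrivial dependence relation $\sum c_j x_0^{t_1-d_{j1}}y_0^{t_2-d_{j2}}F_j \in I_Z$ in some bidegree, it factors out the largest powers of $x_0$ and then $y_0$, uses that $\{x_0,y_0\}$ is a regular sequence on $R/I_Z$ (so $\overline{y}_0$ is a nonzerodivisor on $R/(I_Z,x_0)$) and that $x_0$ is a nonzerodivisor on $R/I_{Z'}$, and then degree considerations force $\overline{F}_1 \in (\overline{F}_2,\ldots,\overline{F}_p)$, contradicting minimality of the separator set. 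You instead prove the stronger structural statement that $M=I_{Z'}/I_Z$ is a bigraded free module over $S=k[x_0,y_0]$ with $\{\overline{F}_1,\ldots,\overline{F}_p\}$ as a basis, via module-finiteness over $S$, the graded Auslander--Buchsbaum formula ($\pd_S R/I_Z=0$ from the ACM hypothesis, $\pd_S R/I_{Z'}\le 1$ from $x_0$ being a nonzerodivisor), the bound $\pd_S M \le \max\{\pd_S R/I_Z,\ \pd_S R/I_{Z'}-1\}=0$, and a rank/Nakayama count identifying $p$ with $\operatorname{rank}_S M=\deg Z-\deg Z'$. Your identification of which inputs are needed is accurate: the spanning half of the proof of Theorem \ref{linindep}$(i)$ indeed uses only generation of $I_{Z'}/I_Z$ and the position of $P_i$, not goodness, and $x_0$ avoids every associated prime $I_{P_j}$ of $R/I_{Z'}$ by Remark \ref{regularsequenceremark}, exactly as the paper itself uses. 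What each approach buys: the paper's proof is elementary and self-contained, and its precise failure point for $r\ge 3$ factors (the appearance of $H_2x_0+H_3y_0$) is what motivates Question \ref{conjecture}; your proof is shorter modulo standard homological facts, recovers Theorem \ref{linindep}$(i)$--$(iii)$ as immediate corollaries (a basis in every bidegree and $p=\deg Z-\deg Z'$), and cleanly isolates where ACM enters (freeness of $R/I_Z$ over $S$) versus where only $\depth_S R/I_{Z'}\ge 1$ is needed -- it also explains structurally why the multigraded case is harder than $\pr^n$, where the analogous subring $k[x_0]$ is a PID. The only points you should spell out are the module-finiteness of $R/I_Z$ over $S$ (graded Noether normalization from the homogeneous system of parameters $\{x_0,y_0\}$) and the simultaneous normalization placing $P_i$ at $[1:0:\cdots:0]\times[1:0:\cdots:0]$ while keeping $x_0,y_0$ as the regular sequence; both are standard and are also implicitly used by the paper.
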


\begin{proof}
After
a change of coordinates, we can assume that $P:= P_i
= [1:0:\cdots:0] \times [1:0:\cdots:0]$
and that $\{x_0,y_0\}$ forms a maximal regular sequence
(see Remark \ref{regularsequenceremark}).

For each $\ut= (t_1,t_2) \in \N^2$,
we wish to show that the set
\[\left\{\overline{x_0^{t_1-d_{11}}y_0^{t_2-d_{12}}F_1},\ldots,
\overline{x_0^{t_1-d_{p1}}y_0^{t_2-d_{p2}}F_p}\right\}\] is a
linearly independent set in $(I_{Z'}/I_Z)_{\ut}$.  We can assume
that $t_1 - d_{j1} \geq 0$ and $t_2-d_{j2} \geq 0$ for all
$j=1,\ldots,p$. If $t_i -d_{ji} < 0$ for some $j$, we simply omit
the term involving $F_j$.

Suppose, for a contradiction, that there exist nonzero constants
$c_1,\ldots,c_p$ such that
\[c_1\overline{x_0^{t_1-d_{11}}y_0^{t_2-d_{12}}F_1}+\cdots
+c_p\overline{x_0^{t_1-d_{p1}}y_0^{t_2-d_{p2}}F_p} = \overline{0} \in (I_{Z'}/I_Z)_{\ut},\]
or equivalently,
\[c_1x_0^{t_1-d_{11}}y_0^{t_2-d_{12}}F_1+\cdots +c_px_0^{t_1-d_{p1}}y_0^{t_2-d_{p2}}F_p \in I_Z.\]
We can reorder the $F_i$'s so that $0 \leq t_1 - d_{11} \leq t_1 - d_{21}
\leq \cdots \leq t_1 - d_{p1}$, and we factor out the largest possible
power of $x_0$, i.e.,
\[x_0^{t_1-d_{11}}(c_1y_0^{t_2-d_{12}}F_1 + \cdots + c_px_0^{d_{11}-d_{p1}}y_0^{t_2-d_{p2}}F_p)
\in I_Z.\]
Because $Z$ is ACM and $\overline{x}_0$ is a nonzero-divisor on $R/I_Z$, we get
\[(c_1y_0^{t_2-d_{12}}F_1 + \cdots + c_ey_0^{t_2-d_{e2}}F_e
+ c_{e+1}x_0^{d_{11}-d_{e+1,1}}y_0^{t_2-d_{e+1,2}}F_{e+1}+ \cdots + c_px_0^{d_{11}-d_{p1}}y_0^{t_2-d_{p2}}F_p)
\in I_Z.\]
Note, in the above expression, we are assuming that
$t_1 - d_{11} = \cdots = t_1 - d_{e1} < t_1 - d_{e+1,1}$.
The above expression thus implies that
\[(c_1y_0^{t_2-d_{12}}F_1 + \cdots + c_ey_0^{t_2-d_{e2}}F_e) \in (I_Z,x_0).\]
We now factor out the largest possible $y_0$ in the above polynomial.
We relabel if necessary so that $t_2 - d_{12} \leq t_2 - d_{i2}$ for $i=2,\ldots,
e$.  So, we get
\[y_0^{t_2-d_{12}}(c_1F_1 + \cdots + c_ey_0^{d_{12}-d_{e2}}F_e) \in (I_Z,x_0).\]

Because $\{x_0,y_0\}$ form a regular sequence on $R/I_Z$, we have
that $\overline{y}_0$ is a nonzero-divisor on $R/(I_Z,x_0)$.  Thus, the previous
expression implies that
\begin{equation}\label{equation1}
(c_1F_1 + \cdots + c_ey_0^{d_{12}-d_{e2}}F_e) \in (I_Z,x_0) \Leftrightarrow
c_1F_1 + \cdots + c_ey_0^{d_{12}-d_{e2}}F_e = H_1 + H_2x_0
\end{equation}
with $H_1 \in I_Z$ and $H_2 \in R$.
Note that if we rearrange the last expression, we get
\[H_2x_0 = (c_1F_1 + \cdots + c_ey_0^{d_{12}-d_{e2}}F_e) - H_1.\]
Since $H_1 \in I_Z \subseteq I_{Z'}$ and $F_1,\ldots,F_e \in I_{Z'}$,
we get $H_2x_0 \in I_{Z'}$.  But $x_0$ is a nonzero-divisor
on $R/I_{Z'}$, so $H_2 \in I_{Z'}$.

So, $H_2 \in I_Z$ or $H_2 \in I_{Z'} \setminus I_Z$ since
$I_{Z'} = (I_{Z'}\setminus I_Z) \cup I_Z$.  However,
if $H_2 \in I_Z$, then this would mean that
\[c_1F_1 \in (I_Z,\hat{F_1},F_2,\ldots,F_p) \Leftrightarrow
(\overline{F}_1,\ldots,\overline{F_p}) = (\overline{F}_2,\ldots,\overline{F}_p)\]
which contradicts the fact that the $F_i$'s are a minimal
set of separators.

So, suppose $H_2 \in I_{Z'}\setminus I_Z$,
or equivalently, $\overline{H}_2 \neq \overline{0}$ in $(I_{Z'}/I_Z)$.
Thus,
\[\overline{H}_2 = \overline{G_1F_1} + \cdots + \overline{G_pF_p}\]
for some $G_1,\ldots,G_p$.  But by degree considerations,
$\deg F_1 \succ \deg H_2$, so $G_1 = 0$.  Hence
\begin{equation}\label{equation2}
H_2 = G_2F_2 + \cdots +G_pF_p + L ~~\mbox{with $L \in I_Z$.}
\end{equation}
If we substitute (\ref{equation2}) into (\ref{equation1}), then we get
\[c_1F_1 + \cdots + c_ey_0^{d_{12}-d_{e2}}F_e = H_1 + [ G_2F_2 + \cdots +G_pF_p + L]x_0\]
which, after rearranging and regrouping, gives
\[c_1F_1 = K + K_2F_2 + \cdots +K_pF_p ~~\mbox{with $K \in I_Z$ and $K_i \in R$.}\]
But this means that $\overline{F}_1 \in (\overline{F}_2,\ldots,\overline{F}_p)
\subseteq R/I_Z$, which again contradicts the fact that the
$F_i$'s are a minimal set of separators.
The conclusion now follows.
\end{proof}

In Example \ref{example2} we noted that $|\deg_Z(P_2)| \neq
\deg Z - \deg Z'$, and that $Z$ was not ACM.  This
can now be deduced from the next corollary.

\begin{corollary}  Let $Z$ and $Z'$ be as in Convention
\ref{convention}.  Suppose that there exists a point
$P$ in $Z$ such that $|\deg_Z(P)| \neq \deg Z - \deg Z'$.
Then $Z$ is not ACM.
\end{corollary}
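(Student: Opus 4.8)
The plan is to prove the contrapositive: assuming $Z$ is ACM, we derive $|\deg_Z(P)| = \deg Z - \deg Z'$ for every point $P \in \supp(Z)$. This should be an immediate consequence of the machinery just developed. First, by Theorem \ref{pnpmcase}, since $Z$ is ACM, any set of minimal separators $\{F_1,\ldots,F_p\}$ of the point $P$ of multiplicity $m$ is automatically a \emph{good} set of minimal separators. Then part $(iii)$ of Theorem \ref{linindep} applies and tells us that $p = \deg Z - \deg Z'$.

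The remaining step is to connect the integer $p$ with the quantity $|\deg_Z(P)|$. By definition, $\deg_Z(P) = (\deg F_1,\ldots,\deg F_p)$ is a $p$-tuple of elements of $\N^2$, so $|\deg_Z(P)|$ — interpreted as the length of this tuple, consistent with its usage in Examples \ref{example1} and \ref{example2} where $|\deg_Z(P)|$ counts the number of separators — equals $p$. Hence $|\deg_Z(P)| = p = \deg Z - \deg Z'$, contradicting the hypothesis that there exists a point $P$ with $|\deg_Z(P)| \neq \deg Z - \deg Z'$. Therefore $Z$ cannot be ACM.

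I do not anticipate a genuine obstacle here; the corollary is essentially a restatement of the combination of Theorems \ref{pnpmcase} and \ref{linindep}$(iii)$. The only point requiring a moment of care is making sure the notation $|\cdot|$ is being used consistently — in Example \ref{example1} one has $|\deg_Z(P)| = m = \binom{m+N-1}{m-1} - \binom{m+N-2}{m-2}$ with $N=2$, which matches the count of generators of $I_P^{m-1}/I_P^m$, so reading $|\deg_Z(P)|$ as the number of entries in the tuple is the correct interpretation, and then the argument above closes immediately. The proof is thus just: \emph{if $Z$ were ACM, then by Theorem \ref{pnpmcase} the minimal separators of $P$ form a good set, so by Theorem \ref{linindep}$(iii)$ we would have $|\deg_Z(P)| = p = \deg Z - \deg Z'$, a contradiction.}
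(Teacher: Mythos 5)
Your proof is correct and follows exactly the paper's own argument: invoke Theorem \ref{pnpmcase} to get a good set of minimal separators when $Z$ is ACM, then apply Theorem \ref{linindep}$(iii)$ to conclude $|\deg_Z(P)| = p = \deg Z - \deg Z'$. Your extra care about reading $|\deg_Z(P)|$ as the length of the tuple is the intended interpretation.
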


\begin{proof}
If $Z$ is ACM, then by the previous theorem, every
point has a good set of minimal separators, whence
$|\deg_Z(P)| = \deg Z - \deg Z'$ by Theorem \ref{linindep}.
\end{proof}

\begin{example}
We compute the Hilbert function of $Z = 3P$ in $\popo$. Note
that $Z$ is ACM in $\popo$, so by Theorem \ref{pnpmcase} and
Corollary \ref{hilbertfunction2}, we get
\[H_{3P}(i,j)\!=\! H_{2P}(i,j)\! +\!
|\{\ud_l \!\in \!\deg_{3P}(P) ~|~ \ud_l \preceq (i,j)\}|\] and
$H_{2P} = H_{P}(i,j)\!+\! |\{\ud_l\! \in\! \deg_{2P}(P) ~|~ \ud_l
\preceq (i,j)\}|.$  By Example \ref{example1}, $\deg_{3P}(P) =
((0,2),(1,1),(2,0))$,  and $\deg_{2P}(P) = ((0,1),(1,0))$. Since
$H_P(i,j) = 1$ for all $(i,j) \in \N^2$,
\[H_{3P} =\begin{bmatrix}
1 & 2 & 3 & 3&\cdots & \\
2 & 4 & 5 & 5&\cdots & \\
3 & 5 & 6 & 6&\cdots &  \\
3 & 5 & 6 & 6&\cdots & \\
\vdots & \vdots & \vdots & \vdots & \ddots &
\end{bmatrix}  \]
where position $(i,j)$ of the matrix corresponds to $H_{3P}(i,j)$
(the indexing starts at zero, not one). We can use this procedure
to compute $H_{mP}$ for any fat point $mP \subseteq \popo$.
\end{example}

\begin{remark} In a forthcoming paper \cite{GV4}, the authors
give a formula for the degree of a separator of any fat point of
an ACM fat point scheme $Z \subseteq \popo$ that requires only numerical
information describing $Z$.
\end{remark}


\section{The degree of a separator and the minimal resolution}

In this section, we describe how $\deg_Z(P_i)$ is encoded into the
bigraded minimal free resolution of $I_Z$ under certain
hypotheses.   Our results can be seen as a natural generalization
of the case for reduced points in $\pr^n$ (see \cite{ABM,B}),
reduced points in $\pnr$ (see \cite{GV3}), and fat points in
$\pr^n$ (see \cite{GMVT}).

We start with two technical lemmas
that shall be required for our induction step.

\begin{lemma} \label{idealepunto}
Let $Z$ and $Z'$ be as in Convention \ref{convention}.   If
$\{F_1,\ldots,F_{p}\}$ is a good set of minimal separators of
$P_i$ of multiplicity $m_i$, then
\[(I_Z,F_1,\ldots,F_{j-1}):(F_j) = I_{P_i} ~~\mbox{for $j = 1,\ldots,p$.}\]
\end{lemma}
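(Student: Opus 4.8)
The plan is to prove the two inclusions separately; only the nontrivial one will use the ``good'' hypothesis. For $I_{P_i} \subseteq (I_Z,F_1,\ldots,F_{j-1}):(F_j)$ it is enough to show $GF_j \in I_Z$ for every $G \in I_{P_i}$. Since $F_j$ is a separator of $P_i$ of multiplicity $m_i$ we have $F_j \in I_{P_i}^{m_i-1}$, so $GF_j \in I_{P_i}\cdot I_{P_i}^{m_i-1}=I_{P_i}^{m_i}$; and $F_j \in I_{P_\ell}^{m_\ell}$ for every $\ell \ne i$, so $GF_j \in I_{P_\ell}^{m_\ell}$ as well. Intersecting gives $GF_j \in \bigcap_{\ell} I_{P_\ell}^{m_\ell}=I_Z$.

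For the reverse inclusion I first note that the colon ideal is bihomogeneous (being the colon of bihomogeneous ideals), so it suffices to take a bihomogeneous $G$, of bidegree $\ud=(d_1,d_2)$ say, with $GF_j \in (I_Z,F_1,\ldots,F_{j-1})$, and show $G \in I_{P_i}$. After the change of coordinates of Remark \ref{regularsequenceremark} we may assume $I_{P_i}=(x_1,\ldots,x_n,y_1,\ldots,y_m)$, so that $R/I_{P_i}\cong k[x_0,y_0]$ as bigraded rings, and the bidegree-$\ud$ part of $R/I_{P_i}$ is spanned by $x_0^{d_1}y_0^{d_2}$. Hence if $G\notin I_{P_i}$ then $G=c\,x_0^{d_1}y_0^{d_2}+G'$ with $c\ne 0$ and $G'\in I_{P_i}$. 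By the first paragraph $G'F_j\in I_Z$, so the hypothesis forces $\overline{x_0^{d_1}y_0^{d_2}F_j}\in(\overline F_1,\ldots,\overline F_{j-1})$ in $R/I_Z$.

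Now I pass to bidegree $\ut:=\ud+\deg F_j=(t_1,t_2)$, so that $x_0^{d_1}y_0^{d_2}F_j=x_0^{t_1-d_{j1}}y_0^{t_2-d_{j2}}F_j$. Writing $x_0^{d_1}y_0^{d_2}F_j=H+\sum_{k<j}G_kF_k$ with $H\in I_Z$ and comparing bihomogeneous components, I may take each $G_k$ bihomogeneous of bidegree $\ut-\deg F_k$, with $G_k=0$ unless $\ut\succeq\deg F_k$. Exactly as in the proof of Theorem \ref{linindep}$(i)$, decompose $G_k=c_k\,x_0^{t_1-d_{k1}}y_0^{t_2-d_{k2}}+G_k'$ with $G_k'\in I_{P_i}$; since $F_k\in I_{P_i}^{m_i-1}$ and $F_k\in I_{P_\ell}^{m_\ell}$ for $\ell\ne i$, the term $G_k'F_k$ lies in $I_Z$, so $\overline{G_kF_k}=c_k\,\overline{x_0^{t_1-d_{k1}}y_0^{t_2-d_{k2}}F_k}$. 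This produces, in $(I_{Z'}/I_Z)_{\ut}$, the identity
\[\overline{x_0^{t_1-d_{j1}}y_0^{t_2-d_{j2}}F_j}=\sum_{k<j,\ \ut\succeq\deg F_k}c_k\,\overline{x_0^{t_1-d_{k1}}y_0^{t_2-d_{k2}}F_k},\]
a nontrivial linear relation (the coefficient of the index-$j$ element is $1$) among the elements attached to $\{F_1,\ldots,F_p\}$ in bidegree $\ut$, contradicting the definition of a good set of minimal separators. Hence $G\in I_{P_i}$, which finishes the proof.

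The step I expect to require the most care is the bookkeeping in the last paragraph: reducing the membership $\overline{x_0^{d_1}y_0^{d_2}F_j}\in(\overline F_1,\ldots,\overline F_{j-1})$ to a single bidegree and verifying that every ``error term'' $G'F_j$ and $G_k'F_k$ genuinely lands in $I_Z$, so that working in $R/I_Z$ leaves only the pure-monomial coefficients — i.e.\ re-running the mechanism behind Theorem \ref{linindep}$(i)$. Once this is in place the contradiction with linear independence is immediate. The degenerate case $j=1$ (empty sum) is even simpler: there the hypothesis directly gives $\overline{x_0^{d_1}y_0^{d_2}F_1}=\overline 0$ in $R/I_Z$, which is impossible since $F_1\in I_{Z'}\setminus I_Z$ and $x_0,y_0$ are nonzerodivisors on $R/I_Z$.
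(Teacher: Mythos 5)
Your proof is correct and follows essentially the same route as the paper's: reduce to a bihomogeneous $G$, split off the $k[x_0,y_0]$-component $c\,x_0^{d_1}y_0^{d_2}$ of $G$ (and of the coefficients $G_k$) modulo $I_{P_i}$, observe that all the error terms $G'F_j$, $G_k'F_k$ land in $I_Z$, and invoke the linear independence in the definition of a good set of minimal separators to kill $c$. The only cosmetic difference is that you run it as a contradiction from $c\neq 0$ while the paper concludes $c=0$ directly.
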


\begin{proof}
We set $\ud_j := \deg F_j$ for $j = 1, \ldots, p$.

To prove the inclusion $I_{P_i}\subseteq (I_{Z},
F_1,\ldots,F_{j-1}):(F_j)$, note that $F_j \in I_{P_q}^{m_q}$ for
all $q \neq i$, and for $q = i$, $F_jI_{P_i} \subseteq
I_{P_i}^{m_i}$ since $F_j \in I_{P_i}^{m_i-1}$.  Hence $F_jI_{P_i}
\subseteq I_Z \subseteq (I_Z,F_1,\ldots,F_{j-1})$.

Set $P:=P_i$. To prove the other inclusion, we do a change of
coordinates so that $\overline{x}_{0},\overline{y}_{0}$ are
nonzero-divisors on $R/I_Z$ and $P = [1:0:\cdots:0] \times
[1:0:\cdots:0]$. Note that this means that $I_{P} =
({x}_{1},\ldots,{x}_{n},{y}_{1},\ldots,{y}_{m})\,.$ Suppose that
$G \in (I_{Z},F_1,\ldots,F_{j-1}):(F_j)$, i.e., $GF_j \in
(I_Z,F_1,\ldots,F_{j-1})$.  Then there exist forms
$A_1,\ldots,A_{j-1} \in R$ and $A \in I_Z$ such that
\begin{equation}\label{GF}
GF_j = A + A_1F_1 + \cdots + A_{j-1}F_{j-1} \Leftrightarrow GF_j -
(A_1F_1 + \cdots + A_{j-1}F_{j-1}) = A \in I_Z.
\end{equation}
We can take $G,A_1,\ldots,A_{j-1}$ to be bihomogeneous.
Furthermore, if $\deg A  = \underline{d}=(d_1,d_2)$, then $\deg G
= \underline{d}- \ud_j$ and $\deg A_l = \underline{d}- \ud_l$ for
$l=1,\ldots,j-1$.  We also write
\[G = c\underline{x}_0^{\underline{d}-\ud_j} + G'~~
\mbox{and}~~A_l = a_l\underline{x}_0^{\underline{d}-\ud_l} +
A'_l\] where we set $\underline{x}_0^{\underline b}=
x_{0}^{b_1}y_{0}^{b_2}$ with $\underline
 b=(b_1,b_2)$, and $G',A_1',\ldots,A_{j-1}' \in I_{P}$.  Note that
if for some
 $k \in \{1,\ldots,j-1\},$ we have
$\underline{d}-\ud_k \not \succeq \underline 0,$
then the term $A_k F_k$ does not
appear. Our goal is to show that $c=0$, whence $G = G'\in I_{P}$.

It follows that $G'F_{j} \in I_{P}^{m_i}$, and similarly $A'_lF_l
\in I_{P}^{m_i}$ for $l=1,\ldots,j-1$.  Because $F_1,\ldots,F_j
\in I_{P_j}^{m_j}$ for $j \neq i$, we get
\[G'F_j - (A'_1F_1 + \cdots + A'_{j-1}F_{j-1}) \in I_Z.\]
If we subtract this expression from (\ref{GF}), we get
\[c\underline{x}_0^{\underline{d}-\ud_j}F_j - (a_1\underline{x}_0^{\underline{d}-\ud_1}F_1
 + \cdots + a_{j-1}\underline{x}_0^{\underline{d}-\ud_{j-1}}F_{j-1}) \in I_Z.\]
 \noindent
But then in $(I_{Z'}/I_Z)_{\underline d}$ we have
\begin{equation}\label{c}
\overline{c\underline{x}_0^{\underline{d}-\ud_j}F_j -
(a_1\underline{x}_0^{\underline{d}-\ud_1}F_1 + \cdots +
a_{j-1}\underline{x}_0^{\underline{d}-\ud_{j-1}}F_{j-1})} =
\overline{0}.
\end{equation}
Since the separators $F_1,\ldots,F_p$ are a good set
of minimal separators, the elements
\[\left\{\overline{\underline{x}_0^{\underline{d}-\ud_1}F_1}, \ldots,
\overline{\underline{x}_0^{\underline{d}-\ud_{j}}F_{j}}\right\}\]
are linearly independent in
$(I_{Z'}/I_Z)_{\underline d}$. Thus equation (\ref{c}) holds only
if $c = 0$. But this means that $G = G' \in I_{P}$, as desired.
\end{proof}

We need the following result from homological algebra
(see \cite[Exercise 4.1.2]{W});  here, we use $\pd(N)$
to denote the {\bf projective dimension} of an $R$-module $N$.

\begin{lemma} \label{pdimlemma}
Let $0 \rightarrow M \rightarrow M' \rightarrow M''
\rightarrow 0$ be a short exact sequence of $R$-modules.
If $\pd(M'') \neq \pd(M)+1$, then
$\pd(M') = \max\{\pd(M),\pd(M'')\}.$
\end{lemma}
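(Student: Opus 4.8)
The plan is to reduce everything to the long exact sequence of $\operatorname{Ext}$ together with the standard characterization of projective dimension,
\[\pd(N) = \sup\{\, i \mid \operatorname{Ext}^i_R(N,L) \neq 0 \ \text{for some $R$-module $L$}\,\}.\]
Write $a := \pd(M)$, $b := \pd(M'')$, $c := \pd(M')$, and $m := \max\{a,b\}$; the goal is to show $c = m$ under the assumption $b \neq a+1$. Applying $\operatorname{Hom}_R(-,L)$ to the short exact sequence $0 \to M \to M' \to M'' \to 0$ yields, for each $R$-module $L$ and each $i \geq 0$, the exact four-term segment
\[\operatorname{Ext}^i(M'',L) \longrightarrow \operatorname{Ext}^i(M',L) \longrightarrow \operatorname{Ext}^i(M,L) \longrightarrow \operatorname{Ext}^{i+1}(M'',L),\]
and the whole proof amounts to reading off vanishing statements from this sequence. (The degenerate cases in which one of the three modules is zero are immediate, so I would assume all three are nonzero.)

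First I would dispose of the inequality $c \le m$, which requires no hypothesis at all: for every $i > m$ and every $L$, both outer terms $\operatorname{Ext}^i(M'',L)$ and $\operatorname{Ext}^i(M,L)$ vanish, hence so does the middle term $\operatorname{Ext}^i(M',L)$, giving $\pd(M') \le m$. (Equivalently, the horseshoe lemma splices projective resolutions of $M$ and $M''$, of lengths $a$ and $b$, into one of $M'$ of length $\le m$.)

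Next I would prove $c \ge m$, splitting into two cases. If $a \ge b$, take $i = a$ in the exact segment; since $a+1 > b$ the term $\operatorname{Ext}^{a+1}(M'',L)$ vanishes, so $\operatorname{Ext}^a(M',L) \to \operatorname{Ext}^a(M,L)$ is surjective, and choosing $L$ with $\operatorname{Ext}^a(M,L) \neq 0$ forces $\operatorname{Ext}^a(M',L) \neq 0$, hence $c \ge a = m$. If instead $a < b$, then the hypothesis $b \neq a+1$ upgrades this to $b \ge a+2$, so $b-1 > a$ and therefore $\operatorname{Ext}^{b-1}(M,L) = 0$ for every $L$; taking $i = b$, the connecting homomorphism then makes $\operatorname{Ext}^b(M'',L) \to \operatorname{Ext}^b(M',L)$ injective, and choosing $L$ with $\operatorname{Ext}^b(M'',L) \neq 0$ gives $c \ge b = m$. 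Combining the two inequalities yields $c = m$ in all cases.

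I expect the only genuinely delicate step to be the case $a < b$, and it is exactly there that the numerical hypothesis does its work: the argument needs $\operatorname{Ext}^{b-1}(M,L)$ to vanish so that the boundary map is injective, which is guaranteed precisely by $b \ge a+2$. When $b = a+1$ this vanishing may fail, the boundary map $\operatorname{Ext}^{a}(M,L) \to \operatorname{Ext}^{a+1}(M'',L)$ can be onto with nonzero kernel, and then $\pd(M')$ is free to drop strictly below $\max\{a,b\}$; so the excluded case is a real phenomenon rather than an artifact of the proof.
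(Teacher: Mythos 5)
Your proof is correct. The paper itself gives no argument for this lemma---it simply cites it as Exercise 4.1.2 of Weibel's \emph{An Introduction to Homological Algebra}---and your $\operatorname{Ext}$ long-exact-sequence argument is precisely the standard solution to that exercise: the inequality $\pd(M')\le\max\{\pd(M),\pd(M'')\}$ needs no hypothesis, and the two cases $\pd(M)\ge\pd(M'')$ (surjectivity of $\operatorname{Ext}^a(M',L)\to\operatorname{Ext}^a(M,L)$) and $\pd(M'')\ge\pd(M)+2$ (injectivity of $\operatorname{Ext}^b(M'',L)\to\operatorname{Ext}^b(M',L)$) are handled exactly as they should be. Your closing observation that the excluded case $\pd(M'')=\pd(M)+1$ is a genuine failure mode (e.g.\ $0\to\Omega C\to P\to C\to 0$ with $P$ projective) is also accurate.
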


\begin{lemma} \label{middleACM}
Let $Z,Z'$ be as in Convention
\ref{convention}, and suppose that $\{F_1,\ldots,F_p\}$
is a good set of minimal separators of the point
$P_i$ of multiplicity $m_i$.
If $Z'$ is ACM, then
$\pd(R/(I_Z,F_1,\ldots,F_j)) = N = n+m$ for $j=1,\ldots,p$.
\end{lemma}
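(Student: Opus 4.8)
The plan is to induct on $j$, using the chain of ideals $I_Z \subseteq (I_Z,F_1) \subseteq (I_Z,F_1,F_2) \subseteq \cdots \subseteq (I_Z,F_1,\ldots,F_p) = I_{Z'}$. For each $j$ there is a short exact sequence of $R$-modules
\[
0 \longrightarrow R/\big((I_Z,F_1,\ldots,F_{j-1}):(F_j)\big)(-\ud_j) \stackrel{\cdot F_j}{\longrightarrow} R/(I_Z,F_1,\ldots,F_{j-1}) \longrightarrow R/(I_Z,F_1,\ldots,F_j) \longrightarrow 0,
\]
where $\ud_j = \deg F_j$. By Lemma \ref{idealepunto}, the colon ideal is $I_{P_i}$, so the left-hand module is (a shift of) $R/I_{P_i}$, which by Lemma \ref{residealpt} has projective dimension exactly $N = n+m$. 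This is the engine of the induction.

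**The induction.** For the base case $j=1$: the sequence reads $0 \to (R/I_{P_i})(-\ud_1) \to R/I_Z \to R/(I_Z,F_1) \to 0$. Since $Z'$ is ACM, $\depth R/I_{Z'} = 2$, hence $\pd(R/I_{Z'}) = N$ by the Auslander–Buchsbaum formula (and $R/I_Z$ might not be ACM, but that is fine). Actually for $j=1$ we need $\pd(R/I_Z)$; here I would note that $\pd(R/I_Z) \leq N$ always since $\depth R/I_Z \geq 1$, while $\pd((R/I_{P_i})(-\ud_1)) = N$. To conclude $\pd(R/(I_Z,F_1)) = N$, I apply Lemma \ref{pdimlemma} to the sequence with $M = (R/I_{P_i})(-\ud_1)$, $M' = R/I_Z$, $M'' = R/(I_Z,F_1)$: provided $\pd(M'') \neq \pd(M) + 1 = N+1$ we get $\pd(M') = \max\{\pd(M),\pd(M'')\}$, i.e. $N = \max\{N, \pd(M'')\}$, forcing $\pd(M'') \leq N$; combined with the fact that $\pd(M'') \geq N$ (obtained from the same sequence by a depth/homology argument, since $M$ has the largest possible projective dimension and $M'$ has smaller one), we get equality. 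For the inductive step $1 < j \leq p$, the sequence is $0 \to (R/I_{P_i})(-\ud_j) \to R/(I_Z,F_1,\ldots,F_{j-1}) \to R/(I_Z,F_1,\ldots,F_j) \to 0$, and by the inductive hypothesis the middle term has projective dimension $N$, so the identical application of Lemma \ref{pdimlemma} gives $\pd(R/(I_Z,F_1,\ldots,F_j)) = N$.

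**The subtle point.** The honest obstacle is making Lemma \ref{pdimlemma} do its job: that lemma only yields $\pd(M') = \max\{\pd(M),\pd(M'')\}$ \emph{under the hypothesis} $\pd(M'') \neq \pd(M)+1$. In our situation $\pd(M) = N$ and $\pd(M') = N$ (by induction, or by ACM-ness of $Z'$ in the final step), so the conclusion $N = \max\{N, \pd(M'')\}$ would give $\pd(M'') \leq N$ — but one must still rule out $\pd(M'') = N+1$ to even invoke the lemma. I would handle this by a direct argument on the long exact sequence in $\operatorname{Ext}$ (or $\operatorname{Tor}$): $\operatorname{Tor}^R_{N+1}(k, M'') $ sits between $\operatorname{Tor}^R_{N+1}(k,M')= 0$ and $\operatorname{Tor}^R_N(k,M) \to \operatorname{Tor}^R_N(k,M')$; since $R$ has global dimension $N+2$ over the polynomial ring in $N+2$ variables, $\pd(M'') \leq N+1$ is automatic from $\pd M, \pd M' \leq N$, and one checks $\operatorname{Tor}_{N+1}$ vanishes because the connecting map $\operatorname{Tor}_{N+1}(k,M'') \to \operatorname{Tor}_N(k,M)$ is injective with image in the kernel of an injection — here one uses that the map $M \to M'$ is \emph{injective} and stays so after tensoring appropriately, or more cleanly, that $\pd(M) = N \geq \pd(M')$ forces the relevant piece to vanish. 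Once $\pd(M'') \neq N+1$ is secured, Lemma \ref{pdimlemma} applies and gives $\pd(M'') = \max\{\pd M, \pd M'\} = N$. The lower bound $\pd(M'') \geq N$ can alternatively be read off from the fact that $M$ (a shift of $R/I_{P_i}$) injects into $M'$ with $\pd M = N$: if $\pd M'' $ were $< N$, then the sequence would force $\pd M \leq \max\{\pd M', \pd M'' + 1\} < N+1$... one must be slightly careful, so in the write-up I would argue the inequality $\pd(M'') \geq N$ via $\depth$: $\depth M'' \leq \depth M' $ is not automatic, but $\depth R/(I_Z,F_1,\ldots,F_j) \leq 1$ will follow because this ring cannot be Cohen–Macaulay for $j < p$ and equals $R/I_{Z'}$ only at $j = p$ — actually the cleanest route is that the support of $R/(I_Z,F_1,\ldots,F_j)$ still has dimension $2$ while its depth is at most... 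I would ultimately settle the lower bound by exhibiting that $\operatorname{Tor}_N(k, R/(I_Z,\ldots,F_j)) \neq 0$ using surjectivity of $\operatorname{Tor}_N(k,M') \to \operatorname{Tor}_N(k,M'')$ when $\pd M' = N$ and $\operatorname{Tor}_{N-1}(k,M)$'s contribution is controlled — in short, the homological bookkeeping around the single exact sequence, iterated $p$ times, is the entire content of the proof, and getting the non-vanishing of the top $\operatorname{Tor}$ right is where the care is needed.
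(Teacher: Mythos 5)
There is a genuine gap, and it comes from running the induction in the wrong direction. The paper proves this lemma by \emph{descending} induction starting at $j=p$: there the sequence reads $0 \to (R/I_{P_i})(-\ud_p) \to R/(I_Z,F_1,\ldots,F_{p-1}) \to R/I_{Z'} \to 0$, and the ACM hypothesis on $Z'$ gives $\pd(R/I_{Z'})=N$ outright. Thus at every stage the two \emph{outer} terms of the sequence have known projective dimension $N$, the hypothesis of Lemma \ref{pdimlemma} ($\pd(M'')\neq\pd(M)+1$, i.e.\ $N\neq N+1$) is trivially verified, and the lemma pins down the \emph{middle} term. Your ascending induction inverts this: the unknown is now the quotient $M''=R/(I_Z,F_1,\ldots,F_j)$, which is simultaneously the quantity you want to compute and the quantity appearing in the hypothesis of Lemma \ref{pdimlemma}. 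You correctly identify that you must rule out $\pd(M'')=N+1$, but the patch does not close. With $\operatorname{Tor}_{N+1}(k,M')=0$, the long exact sequence gives $\operatorname{Tor}_{N+1}(k,M'')\cong\ker\bigl(\operatorname{Tor}_N(k,M)\to\operatorname{Tor}_N(k,M')\bigr)$, so you need the map induced by multiplication by $F_j$ on the top Tor to be injective. That injectivity is exactly the nontrivial cancellation statement in the mapping cone (the content of Theorem \ref{pnrsepfrombetti}, which the paper \emph{deduces from} this lemma, not conversely), and your justification --- ``the map $M\to M'$ is injective and stays so after tensoring'' --- is false: tensoring with $k$ does not preserve injectivity.

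Two further concrete errors: (a) in the base case you assert $\pd(R/I_Z)\le N$ ``always since $\depth R/I_Z\ge 1$,'' but $R$ has $N+2$ variables, so Auslander--Buchsbaum gives only $\pd(R/I_Z)\le N+1$; since the lemma does not assume $Z$ is ACM, $\pd(R/I_Z)=N$ is not available at the start (in the paper it emerges as a \emph{consequence} of the descending induction). (b) Your argument essentially never uses the hypothesis that $Z'$ is ACM until $j=p$, which is a structural warning sign: that hypothesis is the anchor of the whole proof and must enter at the first step, as it does in the paper's version. The lower bound $\pd(M'')\ge N$, about which you equivocate, is the easy part: $I_Z\subseteq(I_Z,F_1,\ldots,F_j)\subseteq I_{Z'}$ squeezes the Krull dimension to $2$, so $\depth\le 2$ and $\pd\ge N$; but this does not rescue the upper bound. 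The fix is simply to reverse the induction as the paper does.
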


\begin{proof}
For each $j = 1,\ldots,p$, we have the short exact sequence
\begin{equation}\label{ses3}
0\rightarrow \left(R/(I_Z,F_1,\ldots,F_{j-1}):(F_j)\right)(-\ud_{j})
\xrightarrow{\times F_{j}} R/(I_{Z},F_1,\ldots,F_{j-1}) \rightarrow
R/(I_Z,F_1,\ldots,F_j) \rightarrow 0.
\end{equation}
where $\ud_j = \deg F_j$.
But we know from Lemma \ref{idealepunto} that
$(I_Z,F_1,\ldots,F_{j-1}):(F_j) = I_{P_i}$.  So, the
short exact sequence (\ref{ses3})
becomes
\begin{equation}\label{ses4} 0\longrightarrow
(R/I_{P_i})(-\ud_{j}) \stackrel{\times F_j}{\longrightarrow}
R/(I_{Z},F_1,\ldots,F_{j-1}) \longrightarrow R/(I_{Z},F_1,\ldots,F_j)\longrightarrow 0.
\end{equation}
By Lemma \ref{residealpt}, we have
$\pd (R/I_P) = N$ where
$N=n+m$.  We now do
descending induction on $j$.  When $j = p$, then
$I_{Z'} = (I_Z,F_1,\ldots,F_p)$, and $R/I_{Z'}$ is CM by hypothesis.
Since $\dim R/I_Z = 2$, we have $\pd (R/I_{Z'}) = N$.
For $j = p$, the exact sequence (\ref{ses4}) becomes:
\[0\longrightarrow
(R/I_{P_i})(-\ud_{p}) \stackrel{\times F_{p}}{\longrightarrow}
R/(I_{Z},F_1,\ldots,F_{p-1})
\longrightarrow R/(I_{Z},F_1,\ldots,F_p)\longrightarrow 0.
\]
Because $\pd(R/I_{P_i}) = \pd(R/(I_{Z},F_1,\ldots,F_p)) = N$,
Lemma \ref{pdimlemma} implies
\[\pd R/(I_{Z},F_1,\ldots,F_{p-1}) = \max\{\pd(R/I_{P_i}),
\pd(R/(I_{Z},F_1,\ldots,F_p))\} = N.\]
For $j \leq p-1$, we apply the induction hypothesis to (\ref{ses4})
and again use Lemma \ref{pdimlemma}.
\end{proof}

We come to the main result of this section which states that under
certain hypotheses, the entries of $\deg_Z(P_i)$ are encoded into the minimal
free resolution of $I_Z$.

\begin{theorem}\label{pnrsepfrombetti}
Let $Z,Z'$ be  sets of fat points as in Convention
\ref{convention}.  Suppose that $Z$ is ACM, so that the
minimal $\N^2$-graded free resolution of $R/I_Z$ has
the form
\[ 0 \rightarrow \F_N
{\rightarrow}\cdots\rightarrow \F_1 \rightarrow R \rightarrow
R/I_{Z} \rightarrow 0\]
where $N= n+m$.
If $Z'$ is ACM, then \[\F_N= R(-\ud_1-\underline
N)\oplus\cdots\oplus R(-\ud_p-\underline N)\oplus
\F'_N\] where $\deg_Z(P_i) = (\ud_1,\ldots,\ud_p)$ and
$\underline{N} = (n,m)$.
\end{theorem}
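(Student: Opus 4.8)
The plan is to build the last free module $\F_N$ in the resolution of $R/I_Z$ by splicing together the short exact sequences (\ref{ses4}) from Lemma \ref{middleACM}. Since $Z$ is ACM we know $\pd(R/I_Z) = N$; since $Z'$ is ACM, Lemma \ref{middleACM} tells us that $\pd(R/(I_Z,F_1,\ldots,F_j)) = N$ for every $j = 1,\ldots,p$. For each such $j$, the sequence (\ref{ses4}),
\[0 \longrightarrow (R/I_{P})(-\ud_j) \stackrel{\times F_j}{\longrightarrow} R/(I_Z,F_1,\ldots,F_{j-1}) \longrightarrow R/(I_Z,F_1,\ldots,F_j) \longrightarrow 0,\]
with $P = P_i$, is the engine. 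By Lemma \ref{residealpt} the resolution of $R/I_P$ has length $N$ with $\mathbb{G}_N = R(-n,-m) = R(-\underline N)$, so the resolution of $(R/I_P)(-\ud_j)$ has last module $R(-\ud_j - \underline N)$. The idea is to read off, from the long exact sequence in $\operatorname{Tor}$, how the last Betti numbers of the three terms are related.

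First I would set up the comparison at homological degree $N$. Applying $-\otimes_R k$ to (\ref{ses4}) and using that $\operatorname{Tor}^R_{N+1}$ of everything in sight vanishes (all three modules have projective dimension exactly $N$), I get a short exact sequence of $k$-vector spaces in each multidegree
\[0 \to \operatorname{Tor}^R_N\big((R/I_P)(-\ud_j)\big) \to \operatorname{Tor}^R_N\big(R/(I_Z,F_1,\ldots,F_{j-1})\big) \to \operatorname{Tor}^R_N\big(R/(I_Z,F_1,\ldots,F_j)\big) \to 0,\]
provided the connecting map $\operatorname{Tor}^R_{N+1}(R/(I_Z,F_1,\ldots,F_j)) \to \operatorname{Tor}^R_N((R/I_P)(-\ud_j))$ is zero, which it is because its source is zero. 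Hence the graded Betti numbers at step $N$ add: the last free module of $R/(I_Z, F_1,\ldots,F_{j-1})$ is $R(-\ud_j-\underline N)$ plus the last free module of $R/(I_Z,F_1,\ldots,F_j)$. Iterating downward from $j = p$ (where $(I_Z,F_1,\ldots,F_p) = I_{Z'}$) to $j = 1$ (where the module is $R/I_Z$) telescopes this into
\[\F_N = R(-\ud_1 - \underline N) \oplus \cdots \oplus R(-\ud_p - \underline N) \oplus \F'_N,\]
where $\F'_N$ is the last free module in the resolution of $R/I_{Z'}$. Finally I would invoke Theorem \ref{pnpmcase}: since $Z$ is ACM, the minimal separators $\{F_1,\ldots,F_p\}$ automatically form a good set of minimal separators, so the hypotheses of Lemmas \ref{idealepunto} and \ref{middleACM} are met, and $(\ud_1,\ldots,\ud_p) = \deg_Z(P_i)$ by definition.

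The main obstacle I anticipate is making the "Betti numbers add in the last spot" step rigorous, i.e. verifying that no cancellation occurs when passing from $R/(I_Z,F_1,\ldots,F_j)$ to $R/(I_Z,F_1,\ldots,F_{j-1})$. The clean way is exactly the $\operatorname{Tor}$ argument above: because all three modules in (\ref{ses4}) have projective dimension \emph{exactly} $N$ (this is the content of Lemma \ref{middleACM}, which in turn needs $Z'$ ACM), the relevant $\operatorname{Tor}_{N+1}$ terms vanish and the long exact sequence degenerates to a short exact sequence at level $N$; there is genuinely no room for a differential to kill part of $\operatorname{Tor}_N$. One should also check the bookkeeping that when $j=1$ the module $R/(I_Z,F_1,\ldots,F_{j-1})$ is literally $R/I_Z$, and when $j=p$ it is $R/I_{Z'}$ — both immediate from $I_{Z'} = (I_Z,F_1,\ldots,F_p)$. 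A secondary point worth a sentence is that all these resolutions are minimal $\N^2$-graded, so "the last free module" is well defined and the direct-sum decomposition is an equality of graded modules, not merely up to free summands.
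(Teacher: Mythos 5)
Your overall strategy---splicing the sequences (\ref{ses4}) together and reading off the last graded Betti numbers---is the same skeleton as the paper's mapping-cone argument, but the key step is asserted rather than proved, and as written the argument does not close. The vanishing of $\operatorname{Tor}^R_{N+1}(R/(I_Z,F_1,\ldots,F_j),k)$ gives you exactness only on the \emph{left}, i.e.\ an injection of $\operatorname{Tor}^R_N\bigl((R/I_{P_i})(-\ud_j),k\bigr)$ into $\operatorname{Tor}^R_N\bigl(R/(I_Z,F_1,\ldots,F_{j-1}),k\bigr)$. For the short exact sequence you claim, you also need exactness on the \emph{right}, and the obstruction there is the \emph{other} connecting map
\[
\operatorname{Tor}^R_N\bigl(R/(I_Z,F_1,\ldots,F_j),k\bigr) \longrightarrow \operatorname{Tor}^R_{N-1}\bigl((R/I_{P_i})(-\ud_j),k\bigr),
\]
whose target is $\mathbb{G}_{N-1}(-\ud_j)\otimes_R k$, which by Lemma \ref{residealpt} is nonzero (it has dimension $n+m$); so ``its source is zero'' does not apply to it. Without right-exactness the telescoping breaks down: left-exactness at step $j$ locates the summand $R(-\ud_j-\uN)$ only inside the last module of a minimal resolution of $R/(I_Z,F_1,\ldots,F_{j-1})$, and to push it back to $\F_N$ (the case $j-1=0$) you must know it lies in the image of $\operatorname{Tor}_N(R/I_{l-1})\to\operatorname{Tor}_N(R/I_l)$ for every $l<j$, i.e.\ in the kernel of the corresponding connecting maps. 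That is exactly what is missing.

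What saves the argument---and what constitutes the actual content of the paper's proof---is a multidegree comparison. The summand in question sits in multidegree $\ud_j+\uN$, while $\operatorname{Tor}^R_{N-1}\bigl((R/I_{P_i})(-\ud_l),k\bigr)$ is concentrated in multidegrees $\ud_l+(n-1,m)$ and $\ud_l+(n,m-1)$; a nonzero component of the connecting map would force $\ud_j=\ud_l-(1,0)$ or $\ud_j=\ud_l-(0,1)$ with $l\le j$, which is impossible once the separators are ordered so that $\ud_1\le\cdots\le\ud_p$ lexicographically. You need to impose that ordering, invoke the explicit shifts of $\mathbb{G}_{N-1}$ from Lemma \ref{residealpt}, and carry out this comparison at each stage; only then do the connecting maps vanish on the relevant graded pieces and your splicing goes through. (A secondary remark: your identification of $\F'_N$ with the last free module of a resolution of $R/I_{Z'}$ is stronger than the theorem states and would require full short-exactness in every multidegree; the theorem itself needs only the containment of the summands $R(-\ud_j-\uN)$ in $\F_N$.)
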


\begin{proof} 
Let $\mathcal{H}_0$ denote the minimal free resolution of $I_Z$
and  let $F_1,\ldots,F_p$ be a set of minimal separators. We order them
with respect to the lexicographical ordering, i.e., $\deg F_1 = \ud_1 \leq \cdots \leq \deg F_p = \ud_p$.
Since $Z$ is ACM, 
the set $F_1,\ldots,F_p$ is also a good set of minimal separators by Theorem \ref{pnpmcase} .
We will add each $F_1,\ldots,F_p$ to $I_Z$
one at a time, and then consider the resolution
of $(I_Z,F_1,\ldots,F_j)$ for $j =1,\ldots,p$.

When $j=1$, we have the short exact sequence
\begin{equation}\label{ses11}
0\rightarrow R/((I_{Z}):(F_{1}))(-\ud_{1}) = (R/I_{P_i})(-\ud_1) \xrightarrow{\times
F_{1}} R/I_{Z} \rightarrow R/(I_{Z},F_{1})\rightarrow 0.
\end{equation}
By Lemma \ref{residealpt}, the resolution of $R/I_{P_i}$ has form
\[
0 \rightarrow \mathbb{G}_{N}=R(-\uN) \rightarrow \mathbb{G}_{N-1}
\rightarrow \cdots \rightarrow \mathbb{G}_1 \rightarrow R
\rightarrow
 R/I_{P_i} \rightarrow 0\] \noindent where $N=n+m.$
Applying the mapping cone construction to (\ref{ses11}) we get a
resolution of $I_1=(I_Z,F_1)$:
\begin{equation}\label{multires}
\mathcal{H}_1:~ 0 \rightarrow R(-\ud_1-\uN) \rightarrow
\mathbb{F}_N \oplus \mathbb{G}_{N-1}(-\ud_1) \rightarrow \cdots \rightarrow \mathbb{F}_1 \oplus R(-\ud_1)
\rightarrow R \rightarrow R/I_{1} \rightarrow 0
\end{equation}
\noindent where $\ud_1=(d_{11},d_{12})$ and $\uN=(n,m).$

The resolution of $I_1$ given in (\ref{multires})
is too long since $\pd (R/I_1) = N$ by Lemma \ref{middleACM}.  Thus,
$R(-\ud_1-\uN)$ must be part of the trivial
complex $\mathcal{T}$, and to obtain a minimal resolution,
the term $R(-\ud_1-\uN)$ must cancel with something in
\[ \mathbb{F}_N \oplus \mathbb{G}_{N-1}(-\ud_1) =
\mathbb{F}_{N}\oplus R^{n}(-d_{11}-n+1,-d_{12}-m)\oplus R^{m}(-d_{11}-n,-d_{12}-m+1).\]

By degree considerations, we cannot cancel the term $R(-\ud_1
-\uN)$ with any of the terms of $ R^{n}(-d_{11}-n+1,-d_{12}-m)\oplus R^{m}(-d_{11}-n,-d_{12}-m+1)$.
Thus, $\mathbb{F}_{N} = \mathbb{F}'_{N} \oplus R(-\ud_1-\uN)$,
i.e., the term $R(-\ud_{1}- \uN)$ must cancel with something in
$\mathbb{F}_{N}$. Note that after we cancel $R(-\ud_1-\uN)$, we
get a resolution of $I_1$ which may or may not be minimal.  We let
\[ \mathcal{H}_1:
0 \rightarrow \mathbb{F}'_{N}\oplus \mathbb{G}_{N-1}(-\ud_1)
\rightarrow \cdots \rightarrow R \rightarrow R/I_{1}
\rightarrow 0\] denote this resolution; we shall require
this resolution at the induction step.

More generally, for our induction step, assume that we have shown
that a resolution of $I_{j-1} = (I_Z,F_1,\ldots,F_{j-1})$ is given
by
\[ \mathcal{H}_{j-1}:
0 \rightarrow \mathbb{F'}_N \oplus  \mathbb{G}_{N-1}(-\ud_1)\oplus
\cdots \oplus \mathbb{G}_{N-1}(-\ud_{j-1})\rightarrow \cdots
\rightarrow R \rightarrow R/I_{j-1} \rightarrow 0\] and that
$\mathbb{F}_N = R(-\ud_1-\uN)\oplus \cdots \oplus R(-\ud_{j-1} -
\uN) \oplus \mathbb{F}'_N$. We have a short exact sequence
\begin{equation}\label{ses2}
0\rightarrow R/((I_{j-1}:(F_{j}))(-\ud_{j})
\xrightarrow{\times F_{j}} R/I_{j-1}\rightarrow R/I_j\rightarrow 0
\end{equation}
where $I_j = (I_Z,F_1,\ldots,F_j)$.

We apply the mapping cone construction to (\ref{ses2})
along with the resolution $\mathcal{H}_{j-1}$ to
make a resolution of $R/I_j$.   Since
$R/((I_{j-1}):(F_{j}))(-\ud_{j}) \cong
R/I_{P_i}(-\ud_{j})$,  the mapping cone produces the
resolution:
\[ \mathcal{K}_{j}: 0 \rightarrow  R(-\ud_{j}-\uN)
\rightarrow \mathbb{F'}_N \oplus \mathbb{G}_{N-1}(-\ud_1)\oplus
\cdots \oplus \mathbb{G}_{N-1}(-\ud_{j})\rightarrow \cdots \rightarrow R
\rightarrow  R/I_j\rightarrow 0.\]
This resolution is too long by Lemma \ref{middleACM}, so
$R(-\ud_{j}-\uN)$ must cancel
with a term in
\[ \mathbb{F'}_N\oplus \mathbb{G}_{N-1}(-\ud_1)\oplus \cdots \oplus
\mathbb{G}_{N-1}(-\ud_{j}).\]
The term $R(-\ud_{j}-\uN)$ cannot cancel with any term in
$\mathbb{G}_{N-1}(-\ud_{j})$ by degree considerations.  So,
suppose that $R(-\ud_{j}-\uN)$ cancels with some term in
\[\mathbb{G}_{N-1}(-\ud_l)=
R^{n}(-d_{l1}-n+1,-d_{l2}-m)\oplus R^{m}(-d_{l1}-n,-d_{l2}-m+1) \]
for some $1\leq l<j.$ Hence, either
\[(-d_{j1}-n,-d_{j2}-m)=
(-d_{l1}-n,-d_{l2}-m+1)\]
\noindent from which we get $d_{j1}=d_{l1}$ , and
$d_{j2}=d_{l2}-1$.   But this is not possible since
we have ordered $\ud_1 \leq \cdots \leq \ud_{p}$ with respect to the
lexicographical ordering. Or \[(-d_{j1}-n,-d_{j2}-m)=
(-d_{l1}-n+1,-d_{l2}-m)\]
\noindent from which we get $d_{j1}=d_{l1}-1$ , and
$d_{j2}=d_{l2}$.   But again this is not possible because of the
ordering of $\ud_1 \leq \cdots \leq \ud_{p}$.

 Hence, the term $R(-\ud_{j}-\uN)$ must
cancel with some term in $\mathbb{F}'_{N}\,.$  Hence, $\mathbb
F'_N = \mathbb{F}''_{N} \oplus R(-\ud_{j}-\uN).$  The result now
follows by induction on $j$.
\end{proof}

As a corollary, we can bound on the rank of the last syzygy
module.

\begin{corollary}\label{rank}
With the hypotheses as in Theorem \ref{pnrsepfrombetti} (i), let
$M = \max\{m_1,\ldots,m_s\}$ and $N=n+m$. Then
\[\operatorname{rk}\mathbb{F}_{N} \geq \binom{M+N-2}{N-1}.\]
\end{corollary}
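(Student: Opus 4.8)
The plan is to apply Theorem \ref{pnrsepfrombetti} to a cleverly chosen point and subscheme, and then read off a lower bound on the rank of $\F_N$ from the length of the tuple $\deg_Z(P_i)$. First I would pick a point $P_i \in \supp(Z)$ whose multiplicity $m_i$ attains the maximum $M = \max\{m_1,\ldots,m_s\}$; such a point exists by definition of $M$. Form $Z'$ as in Convention \ref{convention} by lowering the multiplicity of $P_i$ from $M$ to $M-1$. Since $Z$ is assumed ACM (this is part of the hypotheses inherited from Theorem \ref{pnrsepfrombetti}), Theorem \ref{pnpmcase} guarantees that any set of minimal separators of $P_i$ is automatically a good set of minimal separators, so the length $p$ of $\deg_Z(P_i)$ is well-defined and computable.

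The key step is to invoke Theorem \ref{linindep}$(iii)$, which identifies $p$ with a purely numerical quantity: if additionally $Z'$ is ACM, then
\[
p = \deg Z - \deg Z' = \binom{m_i+N-1}{m_i-1} - \binom{m_i+N-2}{m_i-2} = \binom{m_i+N-2}{N-1},
\]
the last equality being the standard Pascal-type identity $\binom{a}{b}-\binom{a-1}{b-1}=\binom{a-1}{b}$ applied with $a = m_i+N-1$ and $b = m_i-1$, so that $a-b = N$ and $\binom{a-1}{b}=\binom{m_i+N-2}{m_i-1}=\binom{m_i+N-2}{N-1}$. With $m_i = M$ this reads $p = \binom{M+N-2}{N-1}$. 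Now Theorem \ref{pnrsepfrombetti} asserts that $\F_N$ contains the summand $R(-\ud_1-\uN)\oplus\cdots\oplus R(-\ud_p-\uN)$, hence $\operatorname{rk}\F_N \geq p = \binom{M+N-2}{N-1}$, which is exactly the claimed inequality.

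The main obstacle is the hypothesis that $Z'$ be ACM: both Theorem \ref{linindep}$(iii)$ and Theorem \ref{pnrsepfrombetti} require it, yet as the introduction stresses, ACM-ness of $Z$ does not guarantee ACM-ness of $Z'$. I would handle this by simply carrying the assumption "$Z'$ is ACM" as part of "the hypotheses of Theorem \ref{pnrsepfrombetti}" referenced in the corollary statement — indeed Theorem \ref{pnrsepfrombetti} itself already builds in "if $Z'$ is ACM," so the corollary inherits it automatically, and no extra work is needed. The only remaining care is to make sure the chosen $P_i$ is one of maximal multiplicity so that the binomial coefficient in the bound is $\binom{M+N-2}{N-1}$ and not a smaller value; since the bound $\operatorname{rk}\F_N \geq p$ holds for the separators of \emph{any} fixed point, choosing the one of largest multiplicity gives the strongest conclusion, and monotonicity of $m \mapsto \binom{m+N-2}{N-1}$ in $m$ shows this is the best such bound obtainable by this method.
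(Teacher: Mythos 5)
Your proposal is correct and follows essentially the same route as the paper: choose a point of maximal multiplicity $M$, use Theorem \ref{linindep}$(iii)$ (via the good-separator property guaranteed by Theorem \ref{pnpmcase}) to get $p=\deg Z-\deg Z'=\binom{M+N-2}{N-1}$, and read off $p$ shifts in $\F_N$ from Theorem \ref{pnrsepfrombetti}. The only tiny inaccuracy is that Theorem \ref{linindep}$(iii)$ does not itself need $Z'$ to be ACM (only the existence of a good set of minimal separators), but since that hypothesis is carried anyway, nothing is affected.
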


\begin{proof}
Suppose $P_i$ has multiplicity $M$.  Then by Theorem \ref{linindep} and Theorem
\ref{pnrsepfrombetti},
at least $|\deg_Z(P)| = \deg Z - \deg Z' = \binom{M+N-2}{N-1}$ shifts
appear in $\mathbb{F}_N$.
\end{proof}

\section{Future Directions}

All of the definitions and results in this paper,
except Theorem \ref{pnpmcase}, can be easily
generalized to $\pnr$.   However, the existence
of good sets of minimal separators, when
$r \geq 3$, appears difficult to prove.  We
propose the following question:

\begin{question}\label{conjecture}
Suppose that $Z = m_1P_1 + \cdots + m_sP_s$ is a set of ACM fat
points in $\pnr$. Is it true that the set of minimal separators
for any fat point of $Z$ is a good set of minimal separators?
\end{question}

In the proof of Theorem \ref{pnpmcase}, we used
equation (\ref{equation1}) and the fact that $x_0$ is a
nonzero-divisor to show that $H_2 \in I_{Z'}$, from which we
derive our contradiction. In trying to generalize our proof to the
case $\pnr$ with $r \geq 3$, we end up with an expression similar
to (\ref{equation1}), but involving more
nonzero-divisors.  For example, when $r=3$, (and using the
variables $x_i$, $y_i$ and $z_i$) we can show that
there exists an element of the form $H_1 + H_2x_0 + H_3y_0$ with
$H_1 \in I_Z$ and $H_2,H_3 \in R$, and that this element is some
combination of the separators.  Thus, there is
an element of the form $H_2x_0 + H_3y_0 \in I_{Z'}$, but unlike
the bigraded case, we do not see how to use the fact that  $x_0$
is also a nonzero-divisor.

We end with some evidence for this question.
Question \ref{conjecture} is true for $r=1$ (see proof of \cite[Theorem 3.3]{GMVT})
and $r=2$, as proved in this paper.    Question \ref{conjecture} also
holds if $m_1 = \cdots = m_s = 1$ for any $r \geq 1$.  This result follows
from \cite[Theorem 5.7]{GV2} where it is shown that $|\deg_Z(P)| = 1$
when $Z$ is ACM.  In other words, $(I_{Z'}/I_Z) = (\overline{F})$
is principally generated, and
$\left\{\overline{\underline{x}_0^{\ut-\deg F}F}\right\}$ is a linearly
independent set in $(I_{Z'}/I_Z)_{\ut}$ for all $\ut$.


\end{document}